\documentclass[11pt]{amsart}
\usepackage{latexsym,amssymb,amsmath}
\usepackage{amsmath,amsfonts}
\usepackage{verbatim}
\usepackage{enumitem}
\usepackage[margin=1.75in]{geometry}


\newtheorem{theorem}{Theorem}[section]
\newtheorem{lemma}[theorem]{Lemma}
\newtheorem{proposition}[theorem]{Proposition}

\newtheorem{definition}[theorem]{Definition}
\newtheorem{remark}[theorem]{Remark}

\hyphenation{di-men-sion-al}
\linespread{1.4}
\allowdisplaybreaks

\usepackage{hyperref}
\hypersetup{
  colorlinks   = true,    
  linkcolor    = blue,    
  urlcolor     = blue,
  citecolor    = red     }

\newcommand{\N}{\mathbb{N}}
\newcommand{\R}{\mathbb{R}}

\newcommand{\I}{\operatorname{I}}

\newcommand{\lb}{\langle}
\newcommand{\rb}{\rangle}
\renewcommand{\d}{\operatorname{d}\!}

\renewcommand{\Re}{\operatorname{Re}}

\renewcommand{\hat}{\,\widehat}
\newcommand{\wt}{\widetilde}


\begin{document}

\title[3--gKdV on $\R^+$]{Sharp well--posedness for the generalized KdV of order three on the half line}

\author{E. Compaan}
\address{Department of Mathematics, Massachusetts Institute of Technology} 
\email{compaan@mit.edu}

\author{N. Tzirakis}
\address{Department of Mathematics, University of Illinois at Urbana-Champaign}
\email{tzirakis@illinois.edu}

\subjclass[2010]{35Q55}

\keywords{KdV system, gKdV system Initial-boundary value problems, Restricted norm method}

\date{\today}

\begin{abstract} In this paper we study the generalized Korteweg de Vries (KdV) equation with the nonlinear term of order three: $(u^{3+1})_x$. We prove sharp local well--posedness for the initial and boundary value problem posed on the right half line. We thus close the gap in the well--posedness theory of the generalized KdV which remained open after the seminal work of Colliander and Kenig in \cite{CK}. 
\end{abstract}

\maketitle

\section{Introduction}

The generalized Korteweg--de Vries equation (gKdV) is the real-valued model
\begin{equation} \label{eq:gkdv}
\begin{cases}
u_t + u_{xxx} + (u^{k+1})_x = 0, \quad (x,t) \in \R^+ \times \R^+, \\
u(x, 0) = u_0(x) \in H^{s}(\R^+),\\
u(0,t) = g(t) \in H^{\frac{s+1}{3}}(\R^+).
\end{cases}
\end{equation}
The initial--boundary value problem \eqref{eq:gkdv} on the half line with a first order transport term, physically
models the evolution of small amplitude, shallow, long water waves propagating
in a channel with forcing applied at the left end \cite{hs}. The well--posedness theory was developed initially by Bona and Winther in \cite{bw, bw1}, and much later by Colliander and Kenig in \cite{CK}. The work of Colliander and Kenig introduced a new method to solve initial--boundary value problems (IBVP) for nonlinear
dispersive partial differential equations by recasting these problems as initial
value problems with appropriate forcing terms. Their work showed that it is possible to obtain well--posedness results for the right half line that match the results that have been obtained on the full line \cite{KPV}, \cite{kpv1}, \cite{G}. Their paper left open the cases when $k=1$ and $k=3$. The $k=1$ case was later completed by Holmer in \cite{jh}. The importance of the work in \cite{CK} is that the authors wrote down a representation solution--formula with certain terms enforcing the boundary conditions. They then employed robust dispersive techniques which were introduced by Bourgain in \cite{bou1} to obtain solutions of the KdV equation on the real line and the circle with nonsmooth data. 

Another method to solve IBVP for disperesive PDE was employed by Bona, Sun and Zhang in \cite{bsz}. In this method, after extending the initial data in the whole real line, one employs the Laplace and Fourier transform method to solve the linear problem in the usual Sobolev spaces. Then an equivalent integral representation (Duhamel's formula) of the solution is written using the trace of the linear boundary operator and the nonlinearity. Trace theorems dictate the regularity of the boundary function and the nonlinearity of each particular PDE dictates the Banach space in which a fixed point of the integral equation is sought. The existence of a fixed point can be established if one has appropriate linear and nonlinear estimates in a suitable function space. For dispersive equations with derivatives, the $X^{s,b}$ spaces of Bourgain, \cite{bou1} are the right choice in most cases. These spaces can be employed using ideas from \cite{CK}.  Using this approach, Bona et. al recently solved the KdV equation and the nonlinear Schr\"odinger equations on the half line, along with many other interesting problems, see \cite{bsz} and the references therein. It is this approach that we follow on our paper. 

As we have already mentioned, the other case that was left open in \cite{CK} was the case $k=3$. On the full line, the sharp local well--posedness for $k=3$ was completed by Gr\"unrock in \cite{G}.
 In particular Gr\"unrock showed well--posedness on the full line for $s > -\frac16$. This result is sharp up to the endpoint due to scaling considerations, \cite{bkpsv}. For the half line the authors in \cite{CK} obtain well--posedness for $H^{\frac1{12}}$ initial data and thus a gap remained with the known $\R$ theory. In this paper we are closing this gap for the problem on the right half-line. 

More precisely we are interested in the following equation, known as the 3--gKdV, on the right half--line:
\begin{equation} \label{eq:3gkdv}
\begin{cases}
u_t + u_{xxx} + (u^4)_x = 0, \quad (x,t) \in \R^+ \times \R^+, \\
u(x, 0) = u_0(x) \in H^{s}(\R^+), \\
u(0,t) = g(t) \in H^\frac{s+1}{3}(\R^+). 
\end{cases}
\end{equation}
The data $(u_0,g)$ will be taken in the space $H^s_x(\R^+)\times H^{\frac{s + 1}{3}}_t(\R^+)$ with the additional compatibility conditions $h(0)=g(0)$ when $\frac{1}{2}< s<\frac52$. These compatibility conditions are necessary since the solutions we are interested in are continuous space-time functions for $s>\frac12$. To state the main theorem of this paper we start with a definition.

\begin{definition}
We say that the 3--gKdV  equation \eqref{eq:3gkdv} is locally well--posed in $H^{s}(\R^+)$ with $s < \frac52$, if for any $(u_0,g) \in H^s_x(\R^+) \times H^{\frac{s + 1}{3}}_t(\R^+)$, with the additional compatibility conditions mentioned above, the equation $\Phi( u) = u$, where $\Phi$ is defined by \eqref{eq:duhamelForm}, has a unique solution in 
\[ C_t^0H^s_x \cap C_x^0H_t^{\frac{s+1}{3}}\cap  X^{s,\frac12}_T ,\]
for some sufficiently small $T$, dependent only on the norms of the initial and boundary data. Furthermore, the solution depends continuously on the initial and boundary data. In addition, the solution on $\R^+$ is independent of the choice of extension used to define \eqref{eq:duhamelForm}. 
\end{definition}

The main result of our paper is the following.

\begin{theorem} \label{main_thrm}
 For any $s \in \left(-\frac16, 2 \right)$, $s \neq \frac12, \frac32$, the equation \eqref{eq:3gkdv} is locally well--posed in $H^s(\R^+)$. Moreover, we have the following smoothing estimate. For any $a < \frac{1}{4}+\min(0,\frac{3s}{2})$, any $u_0 \in H_x^{s}(\R^+)$, and any $g \in H_t^{\frac{s+1}{3}}(\R^+)$ we have that
 \begin{equation*}
  u - W_0^t(u_0,g) \in C_0^tH^{s + a}_x,
 \end{equation*} 
 where the linear IBVP solution $W_0^t(u_0,g)$ is given by equation \eqref{eq:fullairy}. 
\end{theorem}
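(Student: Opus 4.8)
The plan is to recast \eqref{eq:3gkdv} as a fixed point problem for the operator $\Phi$ furnished by Duhamel's formula. After extending $u_0$ to a function in $H^s(\R)$ on the whole line, I would write $\Phi(u)$ as the sum of three pieces: the free Airy evolution of the extended data, the Duhamel integral of the nonlinearity $-(u^4)_x$, and the Colliander--Kenig boundary forcing operator that corrects the trace at $x=0$ so that the condition $u(0,t)=g(t)$ holds. The target space is $C_t^0 H^s_x \cap C_x^0 H_t^{\frac{s+1}{3}} \cap X^{s,\frac12}_T$, and the goal is to show $\Phi$ is a contraction on a small ball there for $T$ small, depending only on the data norms.

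First I would assemble the linear estimates, which break into three groups: (i) Kato-type smoothing together with the energy estimate for the Airy group, controlling the free evolution in all three components of the norm; (ii) the boundary operator estimates, showing that the forcing term lies in the same space and reproduces $g \in H_t^{\frac{s+1}{3}}$ at the trace, where the matching of the spatial exponent $s$ with the temporal exponent $\frac{s+1}{3}$ dictated by the dispersion relation $\tau = \xi^3$ is essential; and (iii) the standard $X^{s,b}$ transfer estimates, which permit running the contraction at $b=\frac12$ with the usual $T^\theta$ gain coming from restriction to $[0,T]$.

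The heart of the proof, and the main obstacle, is the sharp quadrilinear estimate
\[
\Bigl\| \p_x \prod_{j=1}^4 u_j \Bigr\|_{X^{s,-\frac12}} \lesssim \prod_{j=1}^4 \|u_j\|_{X^{s,\frac12}}, \qquad s > -\tfrac16,
\]
which is the half-line analogue of Gr\"unrock's full-line estimate. I would prove it by the Fourier restriction method: on the frequency side the smoothing is governed by the resonance function $h = \xi^3 - \sum_{j} \xi_j^3$ on the set $\xi = \sum_j \xi_j$, since the identity forcing $h$ to equal the signed sum of the modulations $\tau_j - \xi_j^3$ guarantees that at least one modulation is comparable to $|h|$. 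Algebraic factorizations of $h$ distribute the available regularity among the four inputs, and a case analysis organized by the relative sizes of the $\xi_j$ and of the modulation variables reduces matters to bilinear Airy-type $L^2$ estimates. The delicate region is the near-resonant high-frequency interaction, where $|h|$ is small in spite of large frequencies; at the endpoint $s=-\frac16$ the estimate is critical and the $\frac12$ power of the modulation must be spent with care. I expect the bulk of the technical work to concentrate here.

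Finally, the smoothing estimate comes out of the same multilinear analysis. Writing $u - W_0^t(u_0,g)$ as the Duhamel and boundary-corrected nonlinear contribution, I would show it lies in $X^{s+a,\frac12}$, hence in $C_0^tH^{s+a}_x$, by extracting an extra $a$ derivatives from the resonance function $h$ in the quadrilinear estimate. The admissible range $a < \tfrac14 + \min\bigl(0,\tfrac{3s}{2}\bigr)$ records precisely how much of $h$ can be converted into regularity before the estimate breaks: the clean gain $\tfrac14$ persists for $s \ge 0$, while for $s<0$ the loss $\tfrac{3s}{2}$ reflects the regularity deficit that must be absorbed by the low-frequency inputs, degenerating to zero exactly at $s=-\tfrac16$. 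Uniqueness, continuous dependence on the data, and independence of the solution from the chosen extension off $\R^+$ then follow from the contraction estimates together with the insensitivity of the boundary operator to the extension.
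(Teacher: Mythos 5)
Your overall architecture --- a Duhamel formulation with boundary forcing, a contraction in $C_t^0H^s_x \cap C_x^0H_t^{\frac{s+1}{3}}\cap X^{s,\frac12}_T$, and a Gr\"unrock-style quadrilinear estimate --- is the natural first attempt, but it founders on exactly the obstruction this paper was written to circumvent. Your central estimate, $\bigl\| \partial_x(u_1u_2u_3u_4)\bigr\|_{X^{s,-\frac12}} \lesssim \prod_{j=1}^4 \|u_j\|_{X^{s,\frac12}}$, is not available: every ingredient behind Gr\"unrock's full-line result --- the $L^8_{x,t}$ Strichartz embedding \eqref{eq:L8}, the mixed-norm estimate \eqref{eq:G}, and the bilinear Airy estimate for $\I^{\frac12}\I_-^{\frac12}$ --- requires $b>\frac12$ strictly on the inputs and fails at $b=\frac12$; the same is true of the Duhamel transfer estimate (Lemma \ref{RduhamelXsb}), which is what would convert an $X^{s,-\frac12}$ bound on the nonlinearity into an $X^{s,\frac12}$ bound on $\Phi(u)$. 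So a contraction run at $b=\frac12$ cannot close. Nor can you simply raise $b$ above $\frac12$: for half-line problems the boundary correction term can never be placed in $X^{s,b}$, $b>\frac12$, at the natural regularity --- Lemma \ref{linXsb} shows that putting $W_0^t(0,g)$ with $g\in H_t^{\frac{s+1}{3}}$ into $X^{s_0,\frac12+\epsilon}$ forces the loss $s_0 \leq s-3\epsilon$. This tension --- the nonlinear estimate demands $b>\frac12$, the boundary terms forbid it --- is precisely why Colliander--Kenig stalled at $H^{\frac1{12}}$ for $k=3$, and your proposal does not address it.

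The paper resolves it by a decomposition your plan lacks: first solve the linear IBVP exactly to obtain $L$, accept the derivative loss and place $L\in X^{s-3\epsilon,\frac12+\epsilon}$, then contract for $v=u-L$ (which has zero initial and boundary data) in $X^{s+a,\frac12+\epsilon}$, invoking Proposition \ref{nonlin} with \emph{unequal} exponents --- $b=\frac12+\epsilon$ on the inputs, $b'=\frac12-2\epsilon$ on the output --- and using the smoothing gain $a>0$ (available exactly for $s>-\frac16$) to pay back the $3\epsilon$ loss. In other words, the smoothing estimate is not a byproduct to be ``extracted at the end,'' as your final paragraph suggests; it is the mechanism that allows the contraction to close at all. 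A secondary gap: uniqueness and independence of the extension do not simply ``follow from the contraction estimates.'' The fixed point is built from an extension of the data, so the paper proves uniqueness separately --- an energy argument for $H^{\frac32+}$ data, followed by an approximation argument that again leans on the smoothing estimate to keep the existence time from shrinking to zero along the approximating sequence.
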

 
\begin{remark}
The method of the proof of the Theorem is quite general. Using similar arguments we can study the regularity properties of nonlinear dispersive partial differential equations (PDE) on a half line using the tools that are available in the case of the real line, where the PDE are fully dispersive.
\end{remark}
 
\begin{remark}
We should note that the smoothing estimate is not just a byproduct of our multilinear $L^2$ convolution estimates but it is also instrumental in proving the sharp well--posedness theory. In addition, the nonlinear smoothing is used in order to prove that the solutions of \eqref{eq:3gkdv} are unique.
\end{remark} 

\begin{remark}
The proof of the Theorem actually proves that the solution lies in 
\[    
\Bigl[ C^0_tH^s_x \cap C^0_x H^\frac{s+1}3_t \cap X^{s, \frac12} \Bigr]+ \Bigl[ C^0_tH^{s+a}_x \cap C^0_x H^\frac{s+a+1}3_t \cap X^{s+a, \frac12+} \Bigr]. \]
which is a subspace of 
\[ C_t^0H^s_x \cap C_x^0H_t^{\frac{s+1}{3}}\cap  X^{s,\frac12} .\]
For the definition of the $X^{s,b}$ space see the next section.
\end{remark}

\begin{remark}
As expected the smoothing disappears at the upper endpoint $s=-\frac16$ The reader can consult \cite{ETbook} for  many examples of dispersive PDE that enjoy nonlinear smoothing properties at regularities equal to the regularities of the sharp local well--posedness theory. 
\end{remark}

To prove the above theorems we rely on a Duhamel formulation of the nonlinear system adapted to the boundary conditions. This expresses the nonlinear solution as the superposition of the linear evolutions which incorporate the boundary and the initial data with the nonlinearity. Thus, we first solve two linear problems  by a combination of Fourier and Laplace transforms, \cite{ET}, after extending the initial data to the whole line. The idea is then to use the restricted norm method in the Duhamel formula. The novelty in our approach is the following. In the general theory of the KdV equation the dispersive weight of the $X^{s,b}$ norm (in our case $\lb \tau+\xi^3\rb^{b}$) cannot be arbitrary. In particular for initial and boundary value problems of KdV type, the nonlinear Duhamel term  is never in $X^{s,b}$ for $b>\frac12$, see \cite{ET}. But to obtain the nonlinear estimates on the real line in the generalized KdV theory one needs to work with $b>\frac12$. To bypass this issue we are splitting our solution into a linear and a nonlinear part. The linear part can be put in an $X^{s_0,b}$ space with $b>\frac12$ but with $s_0<s$. To close the argument we should be able to recover the loss of derivative in the nonlinear interactions. But this can be done by our smoothing estimates. The fact that we can take $b>\frac12$ in our multilinear estimates is the heart of the matter, since then we can take advantage of the dispersive estimates on the full line and complete the proof. The details of this estimation is presented in Proposition \ref{nonlin}.

In addition the smoothing estimates can also clarify the uniquness of the IBVP. To understand this problem we note that the uniqueness of the solutions thus constructed is not immediate since we do not know that the fixed points of the Duhamel operators have restrictions on the half line which are independent of the extension of the data. For the case of more regular data the uniqueness property of the solution can be proved by standard energy arguments. For less regular data we take advantage of the smoothing estimate we establish in Theorem \ref{main_thrm} to obtain uniqueness all the way down to the local theory threshold. We remark that this iteration is successful because the full nonlinear estimate we provide remains valid for any $s>-\frac{1}{6}$, matching thus the regularity of the local theory.

We now discuss briefly the organization of the paper.  In Section \ref{notation}, we introduce some notation and the function spaces that we use to obtain the well--posedness of the IBVP. In Section \ref{solSetup} we define the notion of the solution. More precisely we set up the integral representation (Duhamel's formula) of the nonlinear solution map that we later prove is a contraction in an appropriate metric space. We obtain the solution as a superposition of a linear and a nonlinear evolution. The solution of the linear IBVP can be found by a direct application of the Fourier and the Laplace transform methods. Section \ref{estimates} presents the linear and nonlinear a priori estimates that we use to iterate the solution using the restricted norm method appropriately modified for our needs. In Section \ref{locTheory} we prove the local well--posedness property of the solutions by splitting the flow into a linear and a nonlinear evolution. This completes the existence part of Theorem \ref{main_thrm}. Uniqueness is proved in Section \ref{uniq}. Section \ref{proofs} is the most technical part of the paper. There we prove the main nonlinear estimate, Proposition \ref{nonlin} and we also establish the proof of Proposition \ref{duhamelEst} which is used in the iteration process.

\section{Notation \& Function Spaces}\label{notation}
The one--dimensional Fourier transform is defined by 
\[ \widehat{f}(\xi) = \mathcal{F}_x f(\xi) = \int_{\R} e^{-i x \xi} f(x) \d x. \]
We set 
\[\lb \xi \rb = \sqrt{ 1 + |\xi|^2} \approx 1 + |\xi|,\]
and define the non--homogeneous and homogenous Sobolev space $H^s$ norms respectively
by 
\[ \| f\|_{H^s(\R)} = \left( \int \lb \xi \rb^{2s} |\hat{f}(\xi)|^2 \d \xi \right)^{\frac12}\] 
and
\[ \| f\|_{\dot{H}^s(\R)} = \left( \int |\xi|^{2s} |\hat{f}(\xi)|^2 \d \xi \right)^{\frac12}.\] 
The characteristic function on the positive half-line $[0,\infty)$ is denoted by $\chi$, and Sobolev spaces $H^s(\R^+)$ on the half line for $s > -\frac12$ are defined as follows:
\begin{align*}
H^s(\R^+) &= \Bigl\{ g \in \mathcal{D}(\R^+) \; : \; \text{there exists } \tilde{g} \in H^s(\R) \text{ with } \tilde{g} \chi = g \Bigr\},  \\
\|g\|_{H^s(\R^+)}&= \inf \Bigl\{ \|\tilde{g} \|_{H^s(\R)} \; : \; \chi \tilde{g} = g \Bigr\}. 
\end{align*}
We will also use the Fourier restriction norm spaces (\cite{bou1}) corresponding to the Airy flow. These are defined for functions on $\R_x \times \R_t$ by the norm 
\[ \|u\|_{X^{s,b}} = \| \lb \xi \rb^s \lb \tau - \xi^3 \rb^b \hat{u}(\xi,\tau) \|_{L^2_{\xi,\tau}}.\] 

For our nonlinear estimate, we make use of the operators $D$, $I^\frac12$, and $I^\frac12_-$, which are defined by the following Fourier transform formulae:
\begin{align}\label{eq:operatornames}
\begin{split}
\hat{D^sf}(\xi) &= \lb \xi \rb^s \hat{f}(\xi), \\
\hat{I^\frac12(f)}(\xi) &= |\xi|^\frac12 \hat{f}(\xi),\\
\hat{I_-^\frac12(f,g)}(\xi) &= \int |2\xi_1 - \xi|^\frac12 \hat{f}(\xi_1) \hat{g}(\xi-\xi_1) \d \xi_1.
\end{split}
\end{align}
The operator $I_-^\frac12$ was introduced by Grunrock in \cite{G}. 

Let $\rho \in C^\infty$ be a cut-off function such that $\rho = 1$ on $[0, \infty)$ and $\operatorname{supp} \rho \subset [-1, \infty)$. Let $\eta \in C^\infty$ be a bump function such that $\eta = 1$ on $[-1,1]$ and $\operatorname{supp} \eta \subset [-2,2]$. Also, define $\eta_T = \eta(\cdot/T)$.

Finally, the notation $a \lesssim b$ indicates that $a \leq Cb$ for some absolute constant $C$. The expression $a \gtrsim b$ is defined similarly, and $a \approx b$ means that $a \lesssim b$ and $a \gtrsim b$. The notation $a+$ indicates $a + \epsilon$, where $\epsilon$ can be arbitrarily small. We define $a-$ similarly.

\section{Solution Formulations} \label{solSetup}
Using the Fourier inversion formula it is a standard fact that for smooth and decaying initial data the solution to the initial value problem,
\begin{equation}\label{eq:lineAiry}
\begin{cases}
u_t + u_{xxx}  = 0, \quad (x,t) \in \R \times \R, \\
u(x, 0) = u_{0}(x),
\end{cases}
\end{equation} 
is given by 

$$
W_\R^t u_{0}(x)=e^{-t\partial_{xxx}} u_{0}(x)= \mathcal F^{-1}\big[e^{it\xi^{3}} \widehat u_{0}(\cdot)\big](x).
$$
On the other hand, the following formula for the solution to the corresponding linear IBVP problem with zero initial data is known.

\begin{lemma}
The solution to the linear equation
\begin{equation}\label{eq:linAiry}
\begin{cases}
v_t + v_{xxx}  = 0, \quad (x,t) \in \R^+ \times \R^+, \\
v(x, 0) = 0 , \\
v(0,t) = g(t) \in H^\frac{s+1}{3}(\R^+),
\end{cases}
\end{equation}
can be written in the form
\begin{equation} \label{eq:IBVPsol} W_0^t(0,g) = \frac3\pi \Re \int_0^\infty e^{i \mu^3 t} e^{-i\mu x \cos(\pi/3)} e^{ - \mu x \sin(\pi/3)} \rho(\mu x ) \;  \hat{\chi g}(\mu^3)  \mu^2 \d \mu.\end{equation}
\end{lemma}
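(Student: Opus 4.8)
The plan is to derive the representation \eqref{eq:IBVPsol} by the Fourier--Laplace method of \cite{ET}. First I would apply the temporal Laplace transform $\mathcal{L}[v](x,\lambda) = \int_0^\infty e^{-\lambda t} v(x,t)\,\d t$, valid for $\Re\lambda > 0$. Since $v(x,0)=0$, the equation $v_t + v_{xxx}=0$ transforms into the third--order ODE in $x$,
\[ \p_x^3 \mathcal{L}[v] = -\lambda\, \mathcal{L}[v], \]
whose characteristic roots solve $r^3 = -\lambda$. For $\Re\lambda > 0$ exactly one of the three roots, call it $r(\lambda)$, has $\Re r < 0$: one checks this directly for $\lambda$ real positive, where the roots are $-\lambda^{1/3}$ and $\lambda^{1/3}e^{\pm i\pi/3}$, and extends it by continuity since no root crosses the imaginary axis for $\Re\lambda>0$. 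Boundedness of $v$ as $x\to\infty$ then forces $\mathcal{L}[v](x,\lambda) = A(\lambda)\,e^{r(\lambda)x}$, and this single undetermined coefficient is exactly what one boundary datum can fix, consistent with the fact that the right half--line Airy problem is well posed with a single boundary condition.

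Next I would impose $v(0,t)=g(t)$. Since $\mathcal{L}[v](0,\lambda)=A(\lambda)$ and $\mathcal{L}[\chi g](\lambda) = \hat{\chi g}(-i\lambda)$, we get $A(\lambda) = \hat{\chi g}(-i\lambda)$. Inverting along a vertical contour and letting $\Re\lambda\to 0^+$, I parametrize $\lambda = i\beta$, $\beta\in\R$. For $\beta>0$ the decaying root is $r(i\beta) = -\beta^{1/3} e^{i\pi/6}$, giving the spatial factor $e^{-\beta^{1/3}x\cos(\pi/6)}e^{-i\beta^{1/3}x\sin(\pi/6)} = e^{-\beta^{1/3}x\sin(\pi/3)}e^{-i\beta^{1/3}x\cos(\pi/3)}$. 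Because $g$ is real, the $\beta<0$ portion of the inversion integral is the complex conjugate of the $\beta>0$ portion (the root for $\beta<0$ is $\overline{r(i|\beta|)}$ and $\hat{\chi g}(-|\beta|)=\overline{\hat{\chi g}(|\beta|)}$), so the two halves combine into twice a real part over $\beta>0$. Substituting $\beta=\mu^3$, so that $\d\beta = 3\mu^2\,\d\mu$ and $e^{i\beta t} = e^{i\mu^3 t}$, collapses the constants to $\tfrac3\pi$ and yields precisely \eqref{eq:IBVPsol} once the cutoff $\rho(\mu x)$ is inserted. This cutoff is harmless on the physical region: for $x>0$, $\mu>0$ one has $\rho(\mu x)=1$, so it does not alter $v$ on $\R^+$; its role is to tame the otherwise growing factor $e^{-\mu x\sin(\pi/3)}$ for $x<0$ and thereby define a tempered extension of $W_0^t(0,g)$ to all of $\R_x$ suitable for the later $X^{s,b}$ estimates.

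The main obstacle is rigor rather than algebra: one must justify pushing the inversion contour to the imaginary axis (decay and integrability of $\hat{\chi g}$ in $\beta$), track the correct analytic branch $r(\lambda)$ throughout, and verify the three defining conditions of \eqref{eq:linAiry}. The equation is immediate, since $\p_t$ produces $i\mu^3$ while $\p_x^3$ produces $r^3 = (-\mu e^{i\pi/6})^3 = -i\mu^3$, which cancel on the set $\{\rho(\mu x)=1\}$. The boundary value is checked by setting $x=0$ (where $\rho(0)=1$ and the exponentials equal $1$) and undoing $\beta=\mu^3$ to recover $\tfrac1{2\pi}\int_\R e^{i\beta t}\hat{\chi g}(\beta)\,\d\beta = (\chi g)(t) = g(t)$ for $t>0$. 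The most delicate point is the zero initial condition $v(x,0)=0$ for $x>0$, which is not visible term--by--term: it follows from the support of $\chi g$ in $t\ge 0$, i.e. the analyticity of $\hat{\chi g}(-i\lambda)$ for $\Re\lambda>0$ (Paley--Wiener), which permits closing the contour in the region where $e^{r(\lambda)x}$ decays and forces the integral to vanish. I would invoke the corresponding computation in \cite{ET} to discharge these analytic points.
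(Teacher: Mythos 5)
Your proposal is correct and follows essentially the same route as the paper: Laplace transform in $t$, reduction to the ODE with the single decaying root $-\lambda^{1/3}$, Bromwich inversion pushed to the imaginary axis, the conjugate-symmetry of $\hat{\chi g}$ (from $g$ real) to combine the two half-lines into $2\Re$, the substitution $\beta=\mu^3$ producing the factor $3\mu^2$, and insertion of the harmless cutoff $\rho(\mu x)$ for convergence at $x<0$. Your explicit verification of the three conditions \eqref{eq:linAiry} (including the Paley--Wiener argument for $v(x,0)=0$) goes beyond the paper's formal derivation, and your explicit conjugate-pairing step is in fact cleaner than the paper's intermediate display, which carries a sign typo in the $x$-exponent of the $\mu<0$ integral before arriving at the correct final formula.
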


\begin{proof}
We will first show that the solution can be written in the form
\begin{equation}\label{eq:IBVPsol0}
v(x,t)= \frac3\pi \Re \int_0^\infty e^{i \mu^3 t} e^{-i\mu x \cos(\pi/3)}e^{- \mu x \sin(\pi/3)} \hat{\chi g}(\mu^3) \mu^2 \d \mu. 
\end{equation}
For a derivation of \eqref{eq:IBVPsol0} using the unified transform method, see \cite{FHM}. For completeness, we also provide a derivation using the Laplace transform below. 

Before proceeding to this calculation, we note that the above integral does not necessarily converge for all $x <0$. To obtain a integral which converges for all $x \in \R$, we multiply the integrand by the cut-off function $\rho$ and note that for $x \geq 0$, we have 
\begin{equation*}v(x,t) = \frac3\pi \Re \int_0^\infty e^{i \mu^3 t} e^{-i\mu x \cos(\pi/3)} e^{ - \mu x \sin(\pi/3)} \rho(\mu x ) \;  \hat{\chi g}(\mu^3)  \mu^2 \d \mu.\end{equation*} 
Thus we obtain the formula given in the Lemma. We will use this version of the solution formula in our work.

To establish the formula \eqref{eq:IBVPsol0} for the solution of the linear problem \eqref{eq:linAiry}, we use the Laplace transform, which is defined for functions on $[0,\infty)$ by 
\[ \wt{v}(\lambda) = \int_0^\infty e^{-\lambda t} v(t) \d t, \qquad \operatorname{Re}(\lambda) \geq 0. \]

Taking the Laplace transform of \eqref{eq:linAiry}, we obtain
\begin{equation*}
\begin{cases}
\lambda \wt{v}(x,\lambda) + \wt{v}_{xxx}(x,\lambda) = 0 \\
\wt{v}(0,\lambda) = \wt{g}(\lambda). 
\end{cases}
\end{equation*}
This ordinary differential equation has characteristic function $\lambda + w^3$. Since we are concerned with solutions which decay at infinity, the only relevant root is $w = - \lambda^{\frac13}$, where the third root is defined with a branch cut along the negative real axis. Thus we have 
\[ \wt{v}(x,\lambda) = e^{-\lambda^{\frac13}x}\; \wt{g}(\lambda).\]
By Mellin inversion,
for any $\sigma >0$ we have
\[ v(x,t) = \frac1{2 \pi i} \int_{\sigma - i \infty} ^{\sigma + i \infty} e^{\lambda t - \lambda^\frac13 x}\;  \wt{g} (\lambda) \d \lambda.\]
Parametrizing the contour by $\lambda(\mu) = \sigma + i\mu^3$ for $\mu \in (-\infty, \infty)$ and letting $\sigma \to 0$, we arrive at
\begin{align*} v(x,t) &=  \frac3{2 \pi } \left( \int_{-\infty} ^{0} \! \! e^{i\mu^3 t - \mu e^{-i\pi/6} x}\;  \wt{g} (i\mu^3) \mu^2\d \mu   +   \int_{0} ^{ \infty} \!\! e^{i\mu^3 t - \mu e^{i\pi/6} x}\;  \wt{g} (i\mu^3) \mu^2\d \mu \right)
\end{align*}
Changing variables in the first integral and simplifying, we obtain
\begin{align*}
v(x,t) &=  \frac3{2 \pi } \int_{0} ^{\infty} \Bigl[ e^{-i\mu^3 t + \mu e^{-i\pi/6} x} \;  \wt{g} (-i\mu^3)+  e^{i\mu^3 t - \mu e^{i\pi/6} x}\;  \wt{g} (i\mu^3) \Bigr]   \mu^2\d \mu \\ 
&=  \frac3{ \pi } \operatorname{Re} \int_{0} ^{\infty}   e^{i\mu^3 t - \cos(\pi/6) \mu x  - i\sin(\pi/6) \mu x} \;   \hat{\chi g} (\mu^3)   \mu^2\d \mu. 
\end{align*}
Since $\cos(\pi/3) = \sin(\pi/6)$ and $\cos(\pi/6) = \sin(\pi/3)$, we have obtained \eqref{eq:IBVPsol0}. 
\end{proof}

We now construct the unique solution of the linear initial-boundary value problem
\begin{equation}\label{eq:fullairy}
\begin{cases}
u_t + u_{xxx}  = 0, \quad (x,t) \in \R^+ \times \R^+, \\
u(x, 0) = u_{0}(x)  \in H^s(\R^+),\\
u(0,t) = g(t) \in H^\frac{s+1}{3}(\R^+),
\end{cases}
\end{equation}
which we denote by    $W_{0}^t(u_{0},g)$,  for $t\in [0,1]$. Note that
\begin{equation} \label{fulllinear}
W_0^t(u_{0},g)=W_0^t(0,g-p)+W_\R^t u_{0,e},
\end{equation}
where $u_{0,e}$ is an $H^s$ extension of $u_{0}$ to the full line satisfying $$\|u_{0,e}\|_{H^s(\R)}\lesssim \|u_0\|_{H^s(\R^+)}$$ and $p$ is defined by $p(t)= \eta(t) \bigl[W_\R^t u_{0,e}\bigr]\Big|_{x=0}$. Note that $p$ is well-defined and is in  $H^{\frac{s+1}{3}}(\R^+)$ by Lemma~\ref{kato} below.

For the full nonlinear problem \eqref{eq:3gkdv}, we have the Duhamel formula 
\begin{multline}\label{eq:duhamelForm0}
u(x,t) = \eta_T(t) W_\mathbb{R}^t u_{0,e} + \eta_T(t) W_0^t(0, g - p - q) \\+ \eta_T(t) \int_0^t W_\R^{t-t'} \bigl[ \partial_x(u^4) \bigr] \d t'  ,
\end{multline}
where 
\begin{align*}
p(t) = \eta(t) \bigl[ W_\R^t u_{0,e}(x) \bigr]\Big|_{x=0} \qquad
q(t) = \eta(t) \left[ \int_0^t W_\R^{t-t'} \bigl[ \partial_x(u^4) \bigr] \d t'\right] \Bigg|_{x=0}.
\end{align*}

As mentioned in the introduction, it will be necessary to break the problem down into linear and nonlinear parts, so we will not be using this formulation directly. However, we will base our formulae on this result. For more details, see the discussion in Section \ref{locTheory}.

\section{{A Priori} Estimates} \label{estimates}

In order to complete our contraction argument, we will require a number of estimates for the linear and nonlinear terms which comprise the Duhamel formula \eqref{eq:duhamelForm0}. In the following, we first collect all relevant linear estimates with any necessary proofs. We then proceed to the more complex nonlinear estimates, which will be proved later.

\subsection{Linear Estimates}

We begin with the following standard $X^{s,b}$ space estimates on $\R$.
\begin{lemma}[{\cite[Lemma 2.8]{tao}}] \label{RlinXsb}
 For any $s$ and any $b$, we have 
 \[ \| \eta(t) W_\R^t u_0 \|_{X^{s,b}} \lesssim \| u_0 \|_{H^s(\R)}. \]
\end{lemma}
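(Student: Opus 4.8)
The plan is to reduce the estimate to an explicit computation of the space--time Fourier transform of $\eta(t) W_\R^t u_0$, exploiting the fact that the free Airy evolution is supported (up to the time localization) on the curve $\tau = \xi^3$ that defines the weight in the $X^{s,b}$ norm. First I would take the spatial Fourier transform: since $\mathcal{F}_x[W_\R^t u_0](\xi,t) = e^{it\xi^3}\hat{u}_0(\xi)$, multiplying by the cut-off gives $\mathcal{F}_x[\eta(t) W_\R^t u_0](\xi,t) = \eta(t)\, e^{it\xi^3}\hat{u}_0(\xi)$. Taking the Fourier transform in $t$ and writing $\hat{\eta}$ for the one--dimensional transform of $\eta$, one obtains the clean factorized formula
\[ \mathcal{F}_{x,t}[\eta(t) W_\R^t u_0](\xi,\tau) = \hat{u}_0(\xi)\, \hat{\eta}(\tau - \xi^3). \]

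With this in hand I would insert the formula directly into the definition of the $X^{s,b}$ norm and separate the $\xi$-- and $\tau$--dependence. The weight $\lb \xi \rb^s$ depends only on $\xi$ and factors out against $\hat{u}_0(\xi)$, while for each fixed $\xi$ the change of variables $\sigma = \tau - \xi^3$ (with unit Jacobian in $\tau$) turns the remaining $\tau$--integral into $\int \lb \sigma \rb^{2b} |\hat{\eta}(\sigma)|^2 \d \sigma$, independent of $\xi$. This yields the identity
\[ \| \eta(t) W_\R^t u_0 \|_{X^{s,b}}^2 = \left( \int \lb \sigma \rb^{2b} |\hat{\eta}(\sigma)|^2 \d \sigma \right) \int \lb \xi \rb^{2s} |\hat{u}_0(\xi)|^2 \d \xi = \|\eta\|_{H^b(\R)}^2\, \|u_0\|_{H^s(\R)}^2, \]
which is exactly the claimed bound with implicit constant $\|\eta\|_{H^b}$.

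There is no genuine analytic obstacle here; the only point requiring care---and the reason the estimate holds for \emph{any} $s$ and \emph{any} $b$ with no restriction---is the finiteness of the constant $\|\eta\|_{H^b(\R)}$. This is immediate because $\eta$ was chosen to be a smooth, compactly supported bump function, hence Schwartz, so $\hat{\eta}$ decays faster than any polynomial and $\int \lb \sigma \rb^{2b}|\hat{\eta}(\sigma)|^2 \d \sigma < \infty$ regardless of how large $b$ is. I would simply remark that this factorization is the mechanism by which multiplication by a smooth time cut-off upgrades the temporal regularity arbitrarily, in contrast with the Duhamel term where no such factorization is available and $b$ must be restricted.
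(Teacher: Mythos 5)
Your proof is correct: the factorization $\mathcal{F}_{x,t}[\eta(t) W_\R^t u_0](\xi,\tau) = \hat{u}_0(\xi)\,\hat{\eta}(\tau-\xi^3)$, the change of variables $\sigma = \tau - \xi^3$, and the Schwartz decay of $\hat{\eta}$ give exactly the claimed bound with constant $\|\eta\|_{H^b}$. The paper does not prove this lemma itself but cites \cite[Lemma 2.8]{tao}, whose proof is precisely this computation, so your argument coincides with the standard one.
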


\begin{lemma}[{\cite[Lemma 2.12]{tao}}] \label{RduhamelXsb}
For any $s\in \R$ and $b > \frac12$, we have
\begin{equation*}
\Big\| \eta(t) \int_0^t  W_\R^{t-t'} F(t^\prime ) dt^\prime \Big\|_{X^{s,b} }\lesssim   \|F\|_{X^{s,b-1} }.
\end{equation*}
\end{lemma}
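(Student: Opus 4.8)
The plan is to reduce the spacetime estimate to a purely one--dimensional estimate in the time variable by conjugating with the free Airy evolution, and then to prove that temporal estimate by hand on the Fourier side. First I would record the identity $\|u\|_{X^{s,b}} = \|W_\R^{-t}u\|_{H^s_x H^b_t}$, which follows directly from the definitions: taking the spatial Fourier transform turns $W_\R^{-t}$ into multiplication by $e^{-it\xi^3}$, so the full spacetime Fourier transform of $W_\R^{-t}u$ equals $\tilde u(\xi,\tau+\xi^3)$, and the change of variables $\tau\mapsto\tau+\xi^3$ converts the flat weight $\lb\tau\rb^b$ into the dispersive weight $\lb\tau-\xi^3\rb^b$. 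Applying $W_\R^{-t}$ to the Duhamel term and using the group property $W_\R^{-t}W_\R^{t-t'}=W_\R^{-t'}$, the operator inside becomes
\[ W_\R^{-t}\Bigl[\eta(t)\int_0^t W_\R^{t-t'}F\,\d t'\Bigr] = \eta(t)\int_0^t G(\cdot,t')\,\d t', \qquad G(\cdot,t') := W_\R^{-t'}F(\cdot,t'), \]
with $\|G\|_{H^s_x H^{b-1}_t}=\|F\|_{X^{s,b-1}}$. Since the weight $\lb\xi\rb^s$ plays no role in the time integration, I would take the spatial Fourier transform and pull it outside; by Plancherel in $\xi$ the desired bound reduces to the scalar temporal estimate
\[ \Bigl\| \eta(t)\int_0^t h(t')\,\d t' \Bigr\|_{H^b_t} \lesssim \|h\|_{H^{b-1}_t}, \]
applied with $h=\hat G(\xi,\cdot)$ for each fixed $\xi$ and then integrated against $\lb\xi\rb^{2s}\,\d\xi$.

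The core of the argument is this temporal estimate, which I would prove on the Fourier side. Writing $\int_0^t h(t')\,\d t' = \frac1{2\pi}\int_\R \frac{e^{it\tau}-1}{i\tau}\,\hat h(\tau)\,\d\tau$ and splitting the $\tau$--integral into $|\tau|\le1$ and $|\tau|>1$, the multiplier decomposes into three pieces. On $|\tau|\le1$ the symbol $\frac{e^{it\tau}-1}{i\tau}$ is smooth in $\tau$, and a Taylor expansion expresses this contribution as a rapidly convergent sum of fixed Schwartz functions $\eta(t)\,t^k$ times moments $\int_{|\tau|\le1}\tau^{k-1}\hat h(\tau)\,\d\tau$, each controlled by $\|h\|_{H^{b-1}_t}$ via Cauchy--Schwarz on the bounded frequency region. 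On $|\tau|>1$ I would split $\frac{e^{it\tau}-1}{i\tau}$ into the constant-in-$t$ part $-\frac1{i\tau}$ and the oscillatory part $\frac{e^{it\tau}}{i\tau}$. The constant part yields $\eta(t)$ times a scalar $\int_{|\tau|>1}\frac{\hat h(\tau)}{i\tau}\,\d\tau$, while the oscillatory part is $\eta(t)$ times the function whose Fourier transform is $\frac{\hat h(\tau)}{i\tau}\mathbf 1_{|\tau|>1}$; since $\lb\tau\rb/|\tau|\approx 1$ there, the latter has $H^b_t$ norm comparable to $\|h\|_{H^{b-1}_t}$, and multiplication by the fixed bump $\eta$ is bounded on $H^b_t$ because $H^b_t$ is a multiplication algebra for $b>\frac12$.

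The step I expect to be the main obstacle is the non--oscillatory high--frequency piece, and it is precisely there that the hypothesis $b>\frac12$ is forced: estimating the scalar $\bigl|\int_{|\tau|>1}\frac{\hat h(\tau)}{\tau}\,\d\tau\bigr|$ by Cauchy--Schwarz requires $\lb\tau\rb^{1-b}/|\tau|\in L^2(|\tau|>1)$, i.e.\ $2(1-b)-2<-1$, equivalently $b>\frac12$; the same threshold is what makes $H^b_t$ an algebra, which is used for the $\eta$--multiplication. For $b\le\frac12$ neither the integrability nor the algebra property survives, which is consistent with the hypothesis being sharp. Once the three pieces are summed we obtain the temporal estimate, and undoing the conjugation with $W_\R^{-t}$ recovers the stated bound.
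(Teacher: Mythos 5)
Your proof is correct, and since the paper does not prove this lemma itself but cites it as standard (\cite[Lemma 2.12]{tao}), the relevant comparison is with that reference: your argument is essentially the same proof, namely conjugation by the free evolution to reduce matters to the one--dimensional temporal estimate $\bigl\| \eta(t)\int_0^t h(t')\,\d t' \bigr\|_{H^b_t} \lesssim \|h\|_{H^{b-1}_t}$, followed by the Fourier--side decomposition with Taylor expansion at low frequencies and the constant/oscillatory splitting at high frequencies. Your identification of where $b>\frac12$ enters (square--integrability of $\langle\tau\rangle^{1-b}/|\tau|$ on $|\tau|>1$, and the boundedness of multiplication by $\eta$ on $H^b_t$) is also accurate.
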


\begin{lemma}[{\cite[Lemma 2.11]{tao}}]\label{powerT}
For any $T\leq1$, and $-\frac12<b_1<b_2<\frac12$, we have
\begin{equation*}
\|\eta_T(t) F \|_{X^{s,b_1}}\lesssim T^{b_2-b_1} \|F\|_{X^{s,b_2}}.
\end{equation*}
\end{lemma}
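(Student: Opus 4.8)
The plan is to reduce the two-dimensional $X^{s,b}$ estimate to a one-dimensional statement in the time variable and then prove that statement by interpolation. First I would observe that multiplication by $\eta_T$ acts only in $t$, so it commutes with the spatial weight $\langle \xi\rangle^s$ and with the unitary phase $e^{-it\xi^3}$. Conjugating by the Airy propagator turns the modulation weight $\langle \tau-\xi^3\rangle^b$ into a plain temporal weight $\langle \tau\rangle^b$: writing $h_\xi(t)=e^{-it\xi^3}\,\widehat{u}^{\,x}(\xi,t)$ one has $\|u\|_{X^{s,b}}^2=\int \langle \xi\rangle^{2s}\,\|h_\xi\|_{H^b_t}^2\,d\xi$, and multiplication by $\eta_T$ corresponds to $h_\xi\mapsto \eta_T h_\xi$ for each frozen $\xi$. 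Hence it suffices to prove, with constant independent of the now-inert frequency, the one-dimensional bound
\[
\|\eta_T f\|_{H^{b_1}(\R)}\lesssim T^{b_2-b_1}\,\|f\|_{H^{b_2}(\R)},\qquad -\tfrac12<b_1<b_2<\tfrac12,\ T\le1.
\]
Since $\eta_T$ is real-valued, the multiplication operator is self-adjoint, so by duality the estimate for the pair $(b_1,b_2)$ is equivalent to that for $(-b_2,-b_1)$; this lets me assume $b_1+b_2\ge0$ and, in particular, reduces the two same-sign cases to one.

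Next I would record two building blocks. Using H\"older together with the Sobolev embedding $H^{b_2}(\R)\hookrightarrow L^{2/(1-2b_2)}(\R)$ and the scaling identity $\|\eta_T\|_{L^{1/b_2}}=T^{b_2}\|\eta\|_{L^{1/b_2}}$, one gets $\|\eta_T f\|_{L^2}\lesssim T^{b_2}\|f\|_{H^{b_2}}$ for $0\le b_2<\tfrac12$, and its adjoint (again because $\eta_T$ is real) gives $\|\eta_T f\|_{H^{-b_2}}\lesssim T^{b_2}\|f\|_{L^2}$ in the same range. These already settle the cross case $b_1\le0\le b_2$ by a composition trick: choosing $\tilde\eta\in C_c^\infty$ with $\tilde\eta\equiv1$ on $\operatorname{supp}\eta$ and writing $\eta_T=\eta_T\,\tilde\eta_T$, I route the estimate through $L^2$ as $H^{b_2}\xrightarrow{\tilde\eta_T}L^2\xrightarrow{\eta_T}H^{b_1}$, producing exactly the factor $T^{b_2}\cdot T^{-b_1}=T^{b_2-b_1}$ (here $-b_1<\tfrac12$ and $b_2<\tfrac12$ are what legitimize the two factors).

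It remains to treat the same-sign case, say $0\le b_1<b_2<\tfrac12$ (the case $b_1<b_2\le0$ then follows by the duality reduction). Here I would interpolate: the first building block gives $f\mapsto\eta_T f$ as a map $H^{b_2}\to L^2$ of norm $\lesssim T^{b_2}$, and complex interpolation of the target between $L^2=H^0$ and $H^{b_2}$ at $\theta=b_1/b_2$ yields $H^{b_2}\to H^{b_1}$ of norm $\lesssim T^{b_2(1-\theta)}=T^{b_2-b_1}$. The single input this requires, and the step I expect to be the \emph{main obstacle}, is the endpoint bound that $f\mapsto\eta_T f$ is bounded on $H^{b}(\R)$ uniformly in $T\le1$ for $0\le b<\tfrac12$. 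A crude Gagliardo--Slobodeckij computation loses a factor $T^{-b}$ from the edges of the cutoff at scale $T$, so the uniform bound must instead be extracted from the classical fact that (sharp or smooth) cutoffs are pointwise multipliers on $H^b$ precisely when $|b|<\tfrac12$, with multiplier norm invariant under translation and dilation of the cutoff. This is exactly where the strict requirement $b_2<\tfrac12$ (and, dually, $b_1>-\tfrac12$) enters.
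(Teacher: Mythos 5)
Your proof is correct: the conjugation reduction to the one--dimensional temporal estimate $\|\eta_T f\|_{H^{b_1}}\lesssim T^{b_2-b_1}\|f\|_{H^{b_2}}$, the H\"older--Sobolev building block giving the $T^{b_2}$ gain, and the duality/composition/interpolation case analysis (resting on the classical uniform-in-$T$ multiplier bound on $H^{b}$ for $|b|<\frac12$) are all sound and cover the full range $-\frac12<b_1<b_2<\frac12$. The paper offers no proof of its own, citing the result as \cite[Lemma 2.11]{tao}, and your argument is essentially the standard proof found in that reference, so the two approaches coincide.
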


We also have a Kato smoothing estimate which describes the temporal regularity of the Airy flow. 
\begin{lemma}[{\cite[Lemma 4.1]{CK}}] \label{kato}
 For any $s \geq -1$, we have
 \[ \| \eta(t) W_\R^t u_0 \|_{L^\infty_x H^{\frac{s+1}{3}}_t} \lesssim \| u_0 \|_{H^s(\R)}. \]
\end{lemma}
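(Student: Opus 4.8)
The plan is to deduce the estimate from the linear bound in Lemma~\ref{RlinXsb} together with an embedding of the form
\[ \|u\|_{L^\infty_x H^{\frac{s+1}{3}}_t} \lesssim \|u\|_{X^{s,b}}, \]
valid for $b>\frac12$ taken sufficiently large depending on $s$. Applying this to $u = \eta(t) W_\R^t u_0$ and invoking Lemma~\ref{RlinXsb} then gives the claim. It is worth stressing at the outset why the time cutoff $\eta$ is indispensable: for data $u_0$ with $\hat{u_0}$ concentrated near $\xi=0$, the uncut flow $W_\R^t u_0(x)$ has temporal Fourier transform comparable to $|\tau|^{-\frac23}\hat{u_0}(\tau^{\frac13})$ (after the substitution $\tau=\xi^3$), and the resulting singularity at $\tau=0$ makes $\|W_\R^t u_0(x)\|_{H^{\frac{s+1}{3}}_t}$ diverge. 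Passing through $X^{s,b}$ with $b>\frac12$ spreads the temporal support off the curve $\tau=\xi^3$ and cures precisely this low--frequency defect.

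To prove the embedding, I would fix $x$ and write the temporal Fourier transform of $u(x,\cdot)$ as $G_x(\tau) = \int_\R e^{ix\xi}\,\hat{u}(\xi,\tau)\,\d\xi$, so that $\|u(x,\cdot)\|_{H^{\frac{s+1}{3}}_t}^2 = \int_\R \lb\tau\rb^{\frac{2(s+1)}{3}}|G_x(\tau)|^2\,\d\tau$. Inserting the weight $\lb\xi\rb^s\lb\tau-\xi^3\rb^b$ and applying Cauchy--Schwarz in $\xi$ bounds $|G_x(\tau)|^2$ by the product of $M(\tau) := \int_\R \lb\xi\rb^{-2s}\lb\tau-\xi^3\rb^{-2b}\,\d\xi$ and $\int_\R \lb\xi\rb^{2s}\lb\tau-\xi^3\rb^{2b}|\hat{u}(\xi,\tau)|^2\,\d\xi$. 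Hence it suffices to establish the uniform multiplier bound
\[ \sup_{\tau\in\R}\ \lb\tau\rb^{\frac{2(s+1)}{3}} M(\tau) < \infty, \]
since the remaining factor integrates in $(\xi,\tau)$ to exactly $\|u\|_{X^{s,b}}^2$, uniformly in $x$; taking the supremum over $x$ finishes the embedding.

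The main obstacle is this multiplier estimate, which is where the dispersion relation and the Sobolev weights must balance. I would analyze $M(\tau)$ by splitting the $\xi$--integral according to the real root $\xi_0=\tau^{\frac13}$ of $\xi^3=\tau$. Near $\xi_0$ one has $\tau-\xi^3\approx -3\xi_0^2(\xi-\xi_0)$, so for $b>\frac12$ the local contribution is comparable to $\lb\xi_0\rb^{-2}\lb\xi_0\rb^{-2s}\approx \lb\tau\rb^{-\frac23}\lb\tau\rb^{-\frac{2s}{3}} = \lb\tau\rb^{-\frac{2(s+1)}{3}}$, which cancels the prefactor $\lb\tau\rb^{\frac{2(s+1)}{3}}$ exactly and is the sharp, dominant term. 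The region $|\xi|\lesssim 1$ contributes $\lesssim\lb\tau\rb^{-2b}$ and the region $|\xi|\gg \lb\xi_0\rb$ contributes $\lesssim\lb\tau\rb^{\frac{1-2s}{3}-2b}$; both are dominated after multiplication by the prefactor once $b$ is chosen larger than $\max\{\tfrac12,\tfrac{s+1}{3}\}$, using $s\geq-1$ to guarantee the large--$\xi$ tail converges. For bounded $|\tau|$ all weights are comparable to $1$ on the relevant compact regions and convergence again follows from $b>\frac12$. Since the exponent $b$ in Lemma~\ref{RlinXsb} is unrestricted, this choice of $b$ is admissible, and the proof concludes.
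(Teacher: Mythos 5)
Your proposal is correct in substance, but there is no internal proof to compare it to: the paper quotes this lemma verbatim from \cite[Lemma 4.1]{CK} and never proves it. Measured against the original Colliander--Kenig argument, your route is genuinely different. They prove the bound directly on the free evolution: for data frequency-localized to $|\xi|\gtrsim 1$, the change of variables $\tau=\xi^3$ converts the $H^{\frac{s+1}{3}}_t$ norm of $W_\R^t u_0$ into the $H^s$ norm of the data (an exact Kato smoothing identity), while the low-frequency piece --- precisely the piece responsible for the divergence of the uncut flow that you correctly diagnose --- is estimated separately using the cutoff $\eta$ and the smoothness of $e^{it\xi^3}$ for $|\xi|\lesssim 1$. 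You instead pass through the embedding $\|u\|_{L^\infty_x H^{\frac{s+1}{3}}_t}\lesssim\|u\|_{X^{s,b}}$ for $b>\max\bigl\{\frac12,\frac{s+1}{3}\bigr\}$ and then invoke Lemma~\ref{RlinXsb}, which the paper states for arbitrary $b$, so the composition is legitimate; the Cauchy--Schwarz reduction to the multiplier bound $\sup_\tau \lb\tau\rb^{\frac{2(s+1)}{3}}M(\tau)<\infty$ is the standard mechanism, and your identification of the stationary region $\xi\approx\tau^{1/3}$, whose contribution $\lb\tau\rb^{-\frac{2(s+1)}{3}}$ cancels the prefactor exactly, is the heart of the estimate and is right. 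As for what each approach buys: yours is self-contained given lemmas already quoted in the paper and applies to arbitrary elements of $X^{s,b}$, not only free evolutions, at the price of a non-sharp, $s$-dependent choice of $b$; the Colliander--Kenig proof is sharper (no auxiliary parameter, an exact identity at high frequency) but is tailored to the free flow.

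One bookkeeping gap you should close: your case list covers only the near-root region, $|\xi|\lesssim 1$, and $|\xi|\gg|\tau|^{1/3}$, omitting the intermediate region $1\ll|\xi|\lesssim|\tau|^{1/3}$ away from the positive root (including $\xi\approx-\tau^{1/3}$). There the modulation weight is $\approx\lb\tau\rb^{-2b}$ and the $\xi$-integral costs at most $\lb\tau\rb^{\frac{1-2s}{3}}+\log\lb\tau\rb+1$, so after multiplication by the prefactor the contribution is at worst $\lb\tau\rb^{1-2b}\log\lb\tau\rb$ or $\lb\tau\rb^{\frac{2(s+1)}{3}-2b}$, both harmless under your constraint on $b$. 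Nothing breaks, but the enumeration should be completed for the proof to stand as written.
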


To bound the solution to the linear initial--boundary--value problem in $X^{s,b}$ spaces, we have the following result.
\begin{lemma} \label{linXsb}
 For any $b \in \left(\frac16,\frac56\right)$ and any $s \geq \max\left\{-1, \frac12-3b\right\}$, we have 
 \[ \| \eta(t) W^t_0(0,g) \|_{X^{s,b}} \lesssim \| \chi g \|_{{H}^{\frac{2s+6b-1}{6} }(\R)}. \] 
\end{lemma}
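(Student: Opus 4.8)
The plan is to insert the explicit boundary-data solution formula from Lemma (equation \eqref{eq:IBVPsol}) into the $X^{s,b}$ norm and reduce the estimate to a one-dimensional multiplier computation. Writing $W_0^t(0,g)$ as a single integral over $\mu \in (0,\infty)$, the key observation is that the temporal factor $e^{i\mu^3 t}$ places the time-frequency at $\tau = \mu^3$, while the spatial profile $e^{-i\mu x \cos(\pi/3)} e^{-\mu x \sin(\pi/3)} \rho(\mu x)$ is a fixed (rescaled) function of $\mu x$ whose spatial Fourier transform can be computed. So I would first take the cutoff $\eta(t)$ into account, then compute the space-time Fourier transform of the integrand. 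Because the spatial profile decays like $e^{-\mu x \sin(\pi/3)}$, its $x$-Fourier transform in $\xi$ is essentially $\mu^{-1}$ times a Schwartz-type bump in $\xi/\mu$ concentrated near $\xi \approx \mu \cos(\pi/3)$, and the $\rho$-cutoff (needed only for $x<0$) contributes harmless tails.

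Carrying this out, I expect the change of variables $\tau = \mu^3$ (so $\mu = \tau^{1/3}$ and $\d\tau = 3\mu^2 \d\mu$, which cancels the $\mu^2$ weight in \eqref{eq:IBVPsol}) to convert $\|\chi g\|$ into an $L^2_\tau$ integral of $|\hat{\chi g}(\tau)|^2$ against an appropriate weight. The spatial weight $\lb \xi \rb^{2s}$ and the dispersive weight $\lb \tau - \xi^3 \rb^{2b}$ in the $X^{s,b}$ norm must then be integrated in $\xi$ against the squared spatial transform. On the support where $\xi \approx \mu = \tau^{1/3}$, one has $\lb \xi \rb^s \approx \lb \tau \rb^{s/3} \approx \lb \mu \rb^s$, and the dispersive weight $\lb \tau - \xi^3 \rb$, evaluated near $\xi = \mu$, is of size $\lb \mu^3 - \xi^3 \rb \approx \mu^2 |\xi - \mu|$; integrating the Schwartz profile in $\xi$ over scale $|\xi - \mu| \lesssim \mu$ should produce a net power $\mu^{2s + 6b - 1}$ after accounting for the Jacobian and the normalization of the bump. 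Matching this against $\lb \tau \rb^{(2s+6b-1)/3} = \lb \mu \rb^{2s+6b-1}$ is exactly the claimed Sobolev exponent $\frac{2s+6b-1}{6}$ on $\chi g$ (the factor $\tfrac12$ appearing because the $H^{\sigma}$ norm squares the weight). I would set things up so that the $\mu$-integral and the $\xi$-integral separate after the profile is expanded, reducing everything to a scalar weight estimate uniform in $\tau$.

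The main obstacle, and the place requiring care, is the interplay between the high and low frequency regimes and the role of the $\rho$ cutoff. For large $\mu$ the analysis above is clean, but for small $\mu$ (equivalently small $\tau$) the weights $\lb \xi \rb$ and $\lb \tau - \xi^3 \rb$ are both $\approx 1$, so the estimate degenerates to an $L^2$ bound that forces the lower constraint $s \geq \max\{-1, \tfrac12 - 3b\}$ and the range $b \in (\tfrac16, \tfrac56)$; one must check that the spatial profile's Fourier transform, including the exponentially-growing-looking factor $e^{-\mu x \sin(\pi/3)}$ truncated by $\rho(\mu x)$, remains uniformly controlled and does not contribute spurious growth from the region $x < 0$. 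I would handle the $\rho$-cutoff by splitting $\rho = 1 - (1-\rho)$, treating the $x \geq 0$ part by the explicit decaying-exponential transform and estimating the $(1-\rho)$ contribution (supported in $x < 0$, where the profile is bounded) as a rapidly-decaying error. The constraint $s \geq \tfrac12 - 3b$ is precisely the threshold at which the exponent $\frac{2s+6b-1}{6}$ becomes nonnegative, consistent with the $H^\sigma$ norm on the right requiring $\sigma > -\tfrac12$, so I expect the endpoint bookkeeping to pin down the stated range of admissible $(s,b)$.
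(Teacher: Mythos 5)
Your main line of attack is the same as the paper's: insert formula \eqref{eq:IBVPsol}, compute the space--time Fourier transform to get (up to constants) $\int_0^\infty \hat{\eta}(\tau-\mu^3)\,\hat{f}(\xi/\mu)\,\hat{\chi g}(\mu^3)\,\mu \d \mu$ with $f(y)=e^{-iy\cos(\pi/3)}e^{-y\sin(\pi/3)}\rho(y)$, exploit the concentration of $\hat{\eta}$ at $\tau=\mu^3$ and the decay of $\hat{f}$, change variables $z=\mu^3$, and read off the exponent $\frac{2s+6b-1}{6}$; your bookkeeping of where the constraints enter (low frequencies forcing a nonnegative Sobolev index) matches the paper's Cases 1, 4, 5. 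What you leave implicit, and the paper makes explicit, is the mechanism that lets the $\mu$- and $\xi$-integrations decouple in the presence of the weight $\lb\tau-\xi^3\rb^b$: the paper uses $|\hat{\eta}(\tau-\mu^3)|\lesssim\lb\tau-\mu^3\rb^{-2+b}\lb\tau-\xi^3\rb^{-b}\lb\mu^3-\xi^3\rb^{b}$, absorbs $\lb\mu^3-\xi^3\rb^b$ into the decay of $\hat{f}$, and finishes with Minkowski and Young; it also handles general $s$ by proving $s=0$ first, differentiating for $s\in\N$, interpolating, and treating $s<0$ directly. Those are completions of your plan rather than different ideas.

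However, there is a genuine error at the one place you yourself flagged as requiring care: the treatment of the $\rho$-cutoff. The split $\rho = 1-(1-\rho)$ does not work. The profile $e^{-\mu x\sin(\pi/3)}$ is \emph{not} bounded for $x<0$; it grows exponentially as $x\to-\infty$, which is precisely why $\rho$ was inserted into \eqref{eq:IBVPsol} in the first place. Hence the piece $(1-\rho)(\mu x)\,e^{-i\mu x\cos(\pi/3)}e^{-\mu x\sin(\pi/3)}$ coincides with the full exponentially growing profile on $\mu x\leq -1$: it is not even a tempered distribution, let alone a ``rapidly-decaying error.'' If one instead reads your split as a sharp cutoff, $\rho=\chi_{[0,\infty)}+(\rho-\chi_{[0,\infty)})$, then both pieces have Fourier transforms decaying only like $|\xi|^{-1}$: the first is explicitly $\bigl(\sin(\pi/3)+i(\xi+\cos(\pi/3))\bigr)^{-1}$, and the second has a jump discontinuity at the origin. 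First-order decay is fatal here: against the weight $\lb\xi\rb^{2s}\lb\tau-\xi^3\rb^{2b}$ the high-frequency $\xi$-integral behaves like $\int|\xi|^{2s+6b-2}\d\xi$, which diverges whenever $2s+6b\geq1$, i.e.\ on the entire admissible range $s\geq\frac12-3b$ of the lemma. The correct move is not to split at all: $f=\rho\cdot(\text{profile})$ is smooth, supported in $[-1,\infty)$, and exponentially decaying, hence Schwartz, so $|\hat{f}(u)|\lesssim\lb u\rb^{-N}$ for any $N$. This is what the paper uses (with the cubic decay $|\hat{f}(\xi/\mu)|\lesssim|\mu|^3/(|\mu|^3+|\xi|^3)$, which incidentally is where the restriction $b<\frac56$ really comes from: it is a high-frequency constraint balancing profile decay against $\lb\tau-\xi^3\rb^b$, not a low-frequency one as you suggest), and it is also what your own main computation tacitly assumed.
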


\begin{proof}
Recall the formula \eqref{eq:IBVPsol} for $W^t_0$. Let $f(y) =  e^{-i y \cos(\pi/3)} e^{ - y \sin(\pi/3)} \rho(y)$. Note that $f$ is a Schwarz function. Then we have
 \[ \eta(t) W_0^t(0,g(x)) = \eta(t) \frac3\pi \Re \int_0^\infty e^{i \mu^3 t} f(\mu x) \;  \hat{\chi g}(\mu^3)  \mu^2 \d \mu \]
and 
\[ \hat{\eta W_0^t}(0,g)(\xi, \tau) = \frac3\pi \Re \int_0^\infty \hat{\eta}(\tau - \mu^3)  \hat{f}(\xi / \mu) \;  \hat{\chi g}(\mu^3)  \mu \d \mu .\] 

\noindent{\underline{\textbf{Case 1: $s=0$ with $|\xi| \lesssim 1$ and $|\mu| \lesssim 1$:}}}
In the this case, note that 
\[ |\hat{\eta}(\tau - \mu^3)| \lesssim \lb \tau - \mu^3 \rb^{-3} \approx \lb \tau \rb^{-3}.\]
Thus on the region where $|\xi|, |\mu| \lesssim 1$, we have the bound 
\begin{align*}
 \Big\| \lb \tau - \xi^3 \rb^b \hat{\eta W_0^t}(0,g)(\xi, \tau) \Big\|_{L^2_{|\xi| \lesssim1,\tau}} 
 &\lesssim \left\| \lb \tau \rb^{b-3} \int_{0}^1 \frac{\mu^4 | \hat{\chi g}(\mu^3) |}{|\mu|^3 + |\xi|^3} \d \mu \right\|_{L^2_{|\xi|\lesssim 1, \tau}} \\
 &\lesssim \left\|  \int_{0}^1 \frac{\mu^4}{|\mu|^3 + |\xi|^3} | \hat{\chi g}(\mu^3) \d \mu \right\|_{L^2_{|\xi|\lesssim 1}}.
\end{align*}
Using Minkowski's inequality to take the $L^2_\xi$ norm inside and noting that
\[ \int_{-1}^1 \frac{\d \xi}{(|\mu|^3 + |\xi|^3)^2} \lesssim \frac1{|\mu|^5}, \] 
we arrive at the bound
\begin{equation} \label{eq:smallfreqcase}
\left|  \int_{0}^1\mu^\frac32 | \hat{\chi g}(\mu^3) \d \mu \right| \approx \left| \int_{0}^1 |z|^{-\frac16} | \hat{\chi g} (z) | \d z \right| 
 \lesssim \| \chi g \|_{L^2}.
\end{equation}
Since $b > \frac16$, this can be bounded by $ \| \chi g \|_{H^{b - \frac16}}$ as desired.

\noindent{\underline{\textbf{Case 2: $s=0$ with $|\xi| \gtrsim 1$ or $|\mu| \gtrsim 1$:}}} In this case, note that since $f$ and $\hat{\eta}$ are Schwarz functions, we have 
\[ |\hat{f}(\xi/\mu)| \lesssim \frac{|\mu|^3}{|\mu|^3 + |\xi|^3} \]
and 
\[ |\hat{\eta}(\tau - \mu^3) | \lesssim \lb \tau - \mu^3 \rb^{-2} \lesssim \lb \tau - \mu^3 \rb^{-2 + b} \lb \tau - \xi^3 \rb ^{-b} \lb \mu^3 - \xi^3 \rb^b. \] 
In light of the these bounds, the problem reduces estimating
\begin{align*}
 \Big\| \lb \tau - \xi^3 \rb^b \hat{\eta W_0^t}&(0,g)(\xi, \tau) \Big\|_{L^2_{\xi,\tau}} \\
 &\lesssim  
 \left\| \lb \tau - \xi^3 \rb^b \int_0^\infty \hat{\eta}(\tau - \mu^3)  \hat{f}(\xi / \mu) \;  \hat{\chi g}(\mu^3)  \mu \d \mu \right\|_{L^2_{\xi,\tau}} \\
 &\lesssim
  \left\| \int_0^\infty  \lb \tau - \mu^3 \rb^{-2 + b} \lb \mu^3 - \xi^3 \rb^b  \frac{|\mu|^4}{|\mu|^3 + |\xi|^3} \; \left| \hat{\chi g}(\mu^3)  \right| \d \mu \right\|_{L^2_{\xi,\tau}} 
  \end{align*}
Since we are in the case where $|\xi| \gtrsim 1$ or $|\mu| \gtrsim 1$, we see that the above quantity is bounded by
\begin{align} \label{eq:higherfreqcase}
  \left\| \int_0^\infty  \lb \tau - \mu^3 \rb^{-2 + b}  \frac{|\mu|^4}{\left( |\mu|^3 + |\xi|^3 \right)^{1-b}} \; \left| \hat{\chi g}(\mu^3)  \right| \d \mu \right\|_{L^2_{\xi,\tau}}.
\end{align}
 Moving the $L^2_\xi$ norm inside the integral and then using Young's inequality, this is bounded by 
\begin{multline*}
 \left\| \int_0^\infty  \lb \tau - \mu^3 \rb^{-2 + b}  |\mu|^{\frac32 + 3b} \; \left| \hat{\chi g}(\mu^3)  \right| \d \mu \right\|_{L^2_\tau} 
 \\ \approx 
 \left\| \int_0^\infty  \lb \tau - z \rb^{-2 + b}  |z|^{-\frac16 + b} \; \left| \hat{\chi g}(z)  \right| \d z \right\|_{L^2_\tau} 
 \lesssim \| \chi g \|_{\dot{H}^{b - \frac16}(\R)} \lesssim \| \chi g \|_{H^{b - \frac16}(\R)},
 \end{multline*}
 where the last inequality holds because $b > \frac16$. This establishes the claim in the $s=0$ case. 
 
\noindent{\underline{\textbf{Case 3: $s>0$:}}} For general positive $s$, note that if $s \in \N$, we have 
 \[ \left| \partial^s_x \Bigl[\eta(t) W_0^t(0,g(x)) \Bigr] \right| \lesssim \left| \eta(t) \int_0^\infty e^{i \mu^3 t} f^{(s)}(\mu x) \;  \hat{\partial^{s/3} \chi g}(\mu^3)  \mu^2 \d \mu \right|. \]
Thus the argument used for the $s=0$ case applies when $s \in \N$. To obtain the desired conclusion for any positive $s$, we interpolate.
 
\noindent{\underline{\textbf{Case 4: $s<0$ with $|\xi| \lesssim 1$ and $|\mu| \lesssim 1$:}}} In this case, the $\lb \xi \rb^s$ multiplier is $\approx 1$, so we may argue just as above to arrive at \eqref{eq:smallfreqcase}. Since $\frac{2s+6b-1}{3} \geq 0$, the result of \eqref{eq:smallfreqcase} is sufficient.  

\noindent{\underline{\textbf{Case 5: $s<0$ with $|\xi| \gtrsim 1$ or $|\mu| \gtrsim 1$:}}} In this case, we argue as in Case 2 to arrive at a bound analogous to \eqref{eq:higherfreqcase}, i.e.
\begin{align} \label{eq:negCase}
  \left\| \int_0^\infty  \lb \tau - \mu^3 \rb^{-2 + b}  \frac{\lb \xi \rb^s |\mu|^4}{\left( |\mu|^3 + |\xi|^3 \right)^{1-b}} \; \left| \hat{\chi g}(\mu^3)  \right| \d \mu \right\|_{L^2_{\xi,\tau}}.
\end{align}
Note that 
\begin{align*}
\left\| \frac{\lb \xi \rb^s |\mu|^4}{\left( |\mu|^3 + |\xi|^3 \right)^{1-b}} \right\|_{L^2_\xi}^2 &= |\mu|^8 \int  \frac{\lb \xi \rb^{2s}}{\left( |\mu|^3 + |\xi|^3 \right)^{2(1-b)}} \d \xi \\
&\approx |\mu|^{3 + 6b} \int  \frac{\lb \mu w \rb^{2s}}{\lb w \rb^{6(1-b)}} \d w 
\end{align*}
If $|\mu| \gtrsim1$, then we have $\lb \mu w \rb^{2s} \lesssim |\mu|^{2s} \lb w \rb^{2s}$, so \eqref{eq:negCase} can be bounded by
\[ \left\| \int_0^\infty  \lb \tau - \mu^3 \rb^{-2 + b}  |\mu|^{\frac32 + 3b +s} \; \left| \hat{\chi g}(\mu^3)  \right| \d \mu \right\|_{L^2_{\tau}}\] 
and the argument can be completed as in Case 2. 

For the region where $|\mu| \leq 1$, the quantity in \eqref{eq:negCase} can be bounded by 
\begin{align*} 
  \left\| \int_0^1  \lb \tau  \rb^{-2 + b}  \frac{ |\mu|^4}{\lb \xi \rb^{3-3b-s}} \; \left| \hat{\chi g}(\mu^3)  \right| \d \mu \right\|_{L^2_{\xi,\tau}}
  &\lesssim \left| \int_0^1  |\mu|^4 \left| \hat{\chi g}(\mu^3)  \right| \d \mu  \right| \\
  &\approx \left| \int_0^1  |z|^2 \left| \hat{\chi g}(z)  \right| \d z \right| \lesssim \| \chi g \|_{L^2}.
\end{align*}
Since ${2s+6b-1} \geq 0$, this estimate implies the desired $H^{\frac{2s + 6b -1}{3}}$ bound. This completes the proof.
\end{proof}

The following Sobolev space estimates for the linear term $W_0^t(0,g)$ are required as well.
\begin{lemma}\label{linHs} 
 For any $s \geq -1$ and $g$ such that $\chi g \in H^{\frac{s+1}{3}}(\R)$, we have 
 \begin{align*}
  W^t_0(0,g) \in C^0_tH^s_x(\R \times \R) \quad \text{and} \quad
  \eta(t) W^t_0(0,g) \in C^0_x H^{\frac{s+1}{3}}(\R \times \R). 
 \end{align*}
\end{lemma}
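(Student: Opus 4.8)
The plan is to work from the explicit formula \eqref{eq:IBVPsol}, estimating the two claimed regularity properties separately. For the spatial regularity $W_0^t(0,g) \in C^0_t H^s_x$, I would take the Fourier transform in $x$ of the formula
\[ W_0^t(0,g)(x) = \frac3\pi \Re \int_0^\infty e^{i\mu^3 t} f(\mu x)\, \hat{\chi g}(\mu^3)\, \mu^2 \, \d\mu, \]
where $f(y) = e^{-iy\cos(\pi/3)} e^{-y\sin(\pi/3)} \rho(y)$ is the same Schwartz function used in the proof of Lemma \ref{linXsb}. This produces $\hat{f}(\xi/\mu)\,\hat{\chi g}(\mu^3)\,\mu$ (up to the real part and constants), and the decay estimate $|\hat{f}(\xi/\mu)| \lesssim |\mu|^3/(|\mu|^3 + |\xi|^3)$ is already available from Case 2 of that proof. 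The task is then to show $\|\lb\xi\rb^s \hat{W_0^t}(0,g)(\xi)\|_{L^2_\xi} \lesssim \|\chi g\|_{H^{(s+1)/3}}$ uniformly in $t$; after substituting $z = \mu^3$ this reduces to a one-dimensional $L^2_\xi$ bound that can be handled by Minkowski and Young's inequality exactly as in Lemma \ref{linXsb}, but now \emph{without} the $\lb\tau-\xi^3\rb^b$ weight (equivalently the $b=0$ endpoint of that computation). The uniform-in-$t$ control then gives continuity in $t$ by dominated convergence, since $e^{i\mu^3 t}$ is continuous in $t$ and the integrand is dominated by an $L^1_\mu$ function after applying Cauchy–Schwarz against $\lb\mu^3\rb^{(s+1)/3}|\hat{\chi g}(\mu^3)|$.

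For the temporal regularity $\eta(t) W_0^t(0,g) \in C^0_x H^{(s+1)/3}_t$, the key observation is that in the formula the $t$-dependence enters only through $e^{i\mu^3 t}$, so the temporal Fourier transform concentrates the frequency $\tau$ near $\mu^3$. I would compute $\mathcal{F}_t[\eta(t) W_0^t(0,g)](x,\tau)$, which yields $\hat\eta(\tau - \mu^3)$ times the spatial profile $f(\mu x)$, and then bound $\|\lb\tau\rb^{(s+1)/3}\,\mathcal{F}_t[\eta W_0^t](x,\tau)\|_{L^2_\tau}$ uniformly in $x$. Here I would use that $f$ is bounded (indeed Schwartz) so $|f(\mu x)| \lesssim 1$, and that $\lb\tau\rb^{(s+1)/3} \approx \lb\mu^3\rb^{(s+1)/3} \approx \lb\mu\rb^{s+1}$ on the support of $\hat\eta(\tau-\mu^3)$ up to the rapidly decaying tails of $\hat\eta$; the change of variable $z=\mu^3$ again converts $\lb\mu\rb^{s+1}\mu^2\,\d\mu$ into $\lb z\rb^{(s+1)/3}\,\d z$, matching the $H^{(s+1)/3}$ norm of $\chi g$. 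Continuity in $x$ then follows from the uniform bound together with continuity of $f(\mu x)$ in $x$ via dominated convergence.

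The main obstacle I expect is the low-frequency region $|\mu| \lesssim 1$, where the weight $\mu^2\,\d\mu$ becomes $|z|^{-2/3}\,\d z$ after the substitution, introducing a potential singularity at $z=0$. This is precisely the difficulty that forced the separate small-frequency analysis in Cases 1 and 4 of Lemma \ref{linXsb}, and the same devices apply: on $|\xi|\lesssim1$ the factor $\hat{f}(\xi/\mu)$ together with $\int_{-1}^1 (|\mu|^3+|\xi|^3)^{-2}\,\d\xi \lesssim |\mu|^{-5}$ yields an extra power of $\mu$ that tames the singularity, reducing matters to an estimate of the form $\int_0^1 |z|^{-1/6}|\hat{\chi g}(z)|\,\d z \lesssim \|\chi g\|_{L^2}$ by Cauchy–Schwarz, which is finite since $-1/6 > -1/2$. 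Because the constraint is only $s \geq -1$, the exponent $(s+1)/3 \geq 0$ is nonnegative, so no further low-frequency loss occurs and the $L^2$ control at low frequency always suffices. The remaining high-frequency region $|\mu|\gtrsim1$ or $|\xi|\gtrsim1$ is handled by the Schwartz decay of $f$ and $\hat\eta$ exactly as in Cases 2 and 5, so no genuinely new estimate is needed beyond removing the dispersive weight.
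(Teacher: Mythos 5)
The spatial half of your argument has a genuine gap. At fixed $t$ there is no $\tau$ variable, so the mechanism that closes Cases 2 and 5 of Lemma \ref{linXsb} --- Young's inequality applied to the additive convolution in $\tau$ of $\lb \tau - z\rb^{-2+b}$ (which comes from $\hat{\eta}$) against $|z|^{b-\frac16}|\hat{\chi g}(z)|$ --- simply does not exist; indeed your appeal to ``the Schwartz decay of $f$ and $\hat\eta$ exactly as in Cases 2 and 5'' is a symptom of the problem, since the fixed-time integrand contains no $\hat{\eta}$ factor at all. It is that $\tau$-convolution which permits $\hat{\chi g}$ to be measured in $L^2$; without it, the Minkowski reduction you propose leaves a scalar integral that must be handled by Cauchy--Schwarz, and this diverges. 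Concretely, for $s > -\frac12$ and the high-frequency region $\mu \gtrsim 1$,
\[
\int_1^\infty \mu^4 |\hat{\chi g}(\mu^3)| \left\| \frac{\lb \xi \rb^s}{\mu^3 + |\xi|^3} \right\|_{L^2_\xi} \d \mu \;\approx\; \int_1^\infty z^{\frac{s}{3}-\frac16}\, |\hat{\chi g}(z)| \d z,
\]
and estimating this by $\|\chi g\|_{H^{\frac{s+1}3}}$ via Cauchy--Schwarz requires $\int_1^\infty z^{2(\frac{s}3 - \frac16)} \lb z \rb^{-\frac{2(s+1)}3} \d z = \int_1^\infty z^{-1} \d z < \infty$, which fails logarithmically for every such $s$; for $s \le -\frac12$ the loss is worse. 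So ``Lemma \ref{linXsb} at $b=0$'' is not obtainable by the stated tools. The fixed-time estimate you want is in fact true, but proving it with absolute values inside the $\mu$-integral requires a Schur-test / Hardy--Littlewood--P\'olya argument for the homogeneous kernel $\frac{\mu^4}{\mu^3+|\xi|^3}$, and that route degenerates as $s \searrow -\frac12$.

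The paper sidesteps all of this by never estimating in the frequency variable $\xi$. It writes $W_0^t(0,g) = \frac3\pi \Re\, T\bigl( W_\R^t \psi \bigr)$, where $\hat{\psi}(\mu) = \chi(\mu)\mu^2 \hat{\chi g}(\mu^3)$ satisfies $\|\psi\|_{H^s} \lesssim \|\chi g\|_{H^{\frac{s+1}3}}$, and $Th(x) = \int f(\mu x) \hat{h}(\mu)\d\mu$. Boundedness of $T$ on $H^s$ (and from $\dot{H}^s$ to $H^s$ when $-1 \le s \le 0$, which is how the paper reaches $s=-1$, a range your sketch does not address for the spatial norm) is proved by Minkowski in the dilation variable $z = \mu x$: this is lossless because $\bigl\| |x|^{-1} \hat{h}(z/x) \bigr\|_{L^2_x} = c\,|z|^{-\frac12}\|h\|_{L^2}$ exactly, and $\int |f(z)|\,|z|^{-\frac12}\d z < \infty$. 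The fixed-time bound and the $C^0_t$ statement then both follow from unitarity and strong continuity of $W_\R^t$ on $H^s$, with no dominated-convergence argument needed.

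Your temporal half, by contrast, is correct, and it is a genuinely different route from the paper's: after $z = \mu^3$ the factor $\hat{\eta}(\tau - z)$ does give an additive convolution, and the peeling $\lb \tau \rb^{\frac{s+1}3} \lesssim \lb \tau - z\rb^{\frac{s+1}3} \lb z \rb^{\frac{s+1}3}$ (where $s\ge-1$ is used), Young's $L^1 * L^2 \to L^2$, and $|f(\mu x)| \lesssim 1$ close the bound uniformly in $x$. The paper instead represents $\eta(t) W_0^t(0,g) = \int \hat{f}(z) \bigl[ \eta(t) W_\R^t \psi \bigr](xz) \d z$ and invokes Kato smoothing (Lemma \ref{kato}) together with $\hat f \in L^1$; your version trades that structural representation for a direct kernel estimate, which is arguably more elementary. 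But the lemma as a whole does not stand until the spatial part is repaired along the lines above.
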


\begin{proof}
Recall the formula \eqref{eq:IBVPsol} for $W^t_0$. First, we establish that $W_0^t(0,g)$ is in $C^0_tH^s_x$. Again let $f(y) =  e^{-i y \cos(\pi/3)} e^{ - y \sin(\pi/3)} \rho(y)$. Then we have
\begin{multline*} 
 W_0^t(0,g(x)) 
= 
\frac3\pi \Re \int_0^\infty e^{i \mu^3 t} f(\mu x) \;  \hat{\chi g}(\mu^3)  \mu^2 \d \mu 
\\= 
\frac3\pi \Re \int_{-\infty}^\infty  f(\mu x) \;  \mathcal{F}\Bigl( W_\R^t \psi \Bigr)(\mu)\d \mu,
\end{multline*}
where 
\[ \hat{\psi}(\mu) = \chi(\mu) \mu^2 \hat{\chi g}(\mu^3) . \]
Now
\begin{align*}
 \| \psi \|_{H^s(\R)}^2 
 &=
 \int_0^\infty \left| \hat{\chi g}(\mu^3)  \right|^2 \mu^4 \lb \mu \rb^{2s} \d \mu \\
 &= 
 \frac13 \int_0^\infty \left| \hat{\chi g}(z)  \right|^2 |z|^{2/3} \lb z \rb^{2s/3} \d z 
 \lesssim \| \chi g \|_{H^\frac{s+1}3(\R)}^2
\end{align*}
for any $s$. Note that for $-1 \leq s \leq 0$ we also have $\| \psi \|_{\dot{H}^s} \lesssim \| \chi g \|_{{H}^\frac{s+1}3(\R)}$. 

Thus, to show that $W_0^t(0,g)$ is in $C^0_tH^s_x$, by continuity of the operator $W_\R^t$, it suffices to show that the operator $T: h \mapsto Th$ given by 
\[ Th(x) = \int_{-\infty}^\infty f(\mu x) \hat{h}(\mu) \d \mu \]
is continuous from $H^s_x$ to $H^s_x$ for $s \geq 0$, and from $\dot{H}^s_x$ to $H^s_x$ for $-1 \leq s \leq 0$. Write 
\[ |Th(x)| \lesssim \int_{-\infty}^\infty |f(z)| \left|\hat{h}(z/x) \right| \; \frac1{|x|} \d \mu. \]
Then 
\begin{multline*}
 \| Th(x)\|_{L^2_x} \lesssim \int_{-\infty}^\infty |f(z)| \left\| \hat{h}(z/x)  \; \frac1{|x|} \right\|_{L^2_x} \d \mu 
  \\\approx 
 \| h \|_{L^2} \int_{-\infty}^\infty |f(z)|/\sqrt{|z|} \d \mu 
 \lesssim \| h \|_{L^2}
\end{multline*}
since $f$ is a Schwartz function.
This establishes the continuity of $T$ for $s = 0$. For $s \in \N$, we note that $\partial_x^s \left[ Th(x)\right] = \int_{-\infty}^\infty f^{(s)}(\mu x) \mu^s \hat{h}(\mu) \d \mu$, and argue as above. Interpolation completes the argument for positive $s$.

For negative $s$, note that 
\[\left| \lb \partial_x \rb^{-1} \left[ Th(x)\right] \right| \lesssim \left|  \partial_x ^{-1} \left[ Th(x)\right] \right| \lesssim \left| \int_{-\infty}^\infty [\partial^{-1}f](\mu x) \mu^{-1} \hat{h}(\mu) \d \mu \right|. \]
Using the same argument as in the $s=0$ case, we conclude that we have $\| Th(x)\|_{H^{-1}_x} \lesssim \| h \|_{\dot{H}^{-1}}$ as desired. 

It remains to show that $\eta(t) W^t_0(0,g)$ is in $C^0_xH^{\frac{s+1}3}_t$. Write 
\[ \eta(t) W_0^t(0,g) = \eta(t) \int_{-\infty}^\infty \mathcal{F}_\mu \left( {f}(x\mu) \right)(y) \;  W^t_\R \psi(y) \d y, \]
where $\psi$ is defined as above and we have used the identity $\int p \hat{q} = \int \hat{p} q$. This is equivalent to
\[  \int_{-\infty}^\infty  \hat{f}(y/x)  \; \Big[  \eta(t) W_\R^t \psi\Big](y) \frac{\d y}{x} = \int_{-\infty}^\infty  \hat{f}(z) \; \Big[  \eta(t) W_\R^t\psi\Big](xz) \d z. \]
Using Kato smoothing, Lemma \ref{kato}, with the fact that $\hat{f} \in L^1$, the proof is completed.  
\end{proof}

\subsection{Nonlinear Estimates}
In this section, we state various nonlinear estimates which will be required to complete our proof. In these estimates, one should think of $b = \frac12 +$ and $b' = \frac12-$. This is the most important case for our arguments.

The first nonlinear estimate is an $X^{s,b}$ space estimate for the 3-gKdV nonlinearity. The proof, which is based on arguments from \cite{G}, is in Section \ref{nonlinProof}. Notice that the smoothing effect vanishes as $s \searrow -\frac16$ for $b' = \frac12-$. 
\begin{proposition}\label{nonlin}
Fix $s > -\frac16$, with $b > \frac12$ and $b' \in (\frac{5-6s}{12}, \frac12)$. Let $0 \leq a < \frac12(3\min\{0,s\} + 6b' - \frac52)$. Then we have 
\begin{equation*} 
\| \partial_x(u_1 u_2 u_3 u_4) \|_{X^{s +a, -b'}} \lesssim \prod_{j=1}^4 \| u_j \|_{X^{s,b}} .
\end{equation*}
\end{proposition}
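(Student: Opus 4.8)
The plan is to reduce the estimate by duality to a five-linear $L^2$ convolution bound and then extract the derivative loss from the bilinear Airy smoothing encoded in the operator $I_-^{1/2}$ together with the modulation weights. Writing $\sigma=\tau-\xi^3$ and $\sigma_j=\tau_j-\xi_j^3$, I set
\[
f_j(\xi_j,\tau_j)=\lb\xi_j\rb^{s}\lb\sigma_j\rb^{b}\,\hat{u}_j(\xi_j,\tau_j),\qquad g=\lb\xi\rb^{-s-a}\lb\sigma\rb^{b'}\,\hat{v},
\]
so that $\|f_j\|_{L^2}=\|u_j\|_{X^{s,b}}$ and, since the dual of $X^{s+a,-b'}$ is $X^{-s-a,b'}$, the left-hand side equals the supremum over $\|v\|_{X^{-s-a,b'}}=1$ of
\[
\int_{*} K\,\Bigl(\prod_{j=1}^4 f_j\Bigr)\,\bar{g},\qquad K=\frac{|\xi|\,\lb\xi\rb^{s+a}}{\lb\sigma\rb^{b'}\prod_{j}\lb\xi_j\rb^{s}\lb\sigma_j\rb^{b}},
\]
the integral being over $\xi=\sum_j\xi_j$, $\tau=\sum_j\tau_j$. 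By the symmetry of the estimate I may order the frequencies and assume the output derivative $|\xi|$ lands on the largest input frequency, so $|\xi|\lesssim\max_j|\xi_j|$.

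The mechanism for controlling $K$ is the four-wave resonance identity. With $\mu=\xi_1+\xi_2$ and $\nu=\xi_3+\xi_4=\xi-\mu$,
\[
\sigma-\sum_{j=1}^4\sigma_j=\sum_{j=1}^4\xi_j^3-\xi^3=-3\bigl(\xi_1\xi_2\mu+\xi_3\xi_4\nu+\mu\nu\xi\bigr),
\]
so the maximal modulation $M=\max\{\lb\sigma\rb,\lb\sigma_1\rb,\dots,\lb\sigma_4\rb\}$ dominates the right-hand side. I would split into cases according to which of the five modulations realises $M$ and according to the frequency configuration (how many of $|\mu|,|\nu|,|\xi|$ are comparable to the top frequency). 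In each case a power of $M$, borrowed from $\lb\sigma\rb^{-b'}$ at the output or from one $\lb\sigma_j\rb^{-b}$, is converted into a frequency gain through the identity above.

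The analytic input is the bilinear Airy smoothing estimate in Gr\"unrock's form,
\[
\bigl\|I^{1/2}I_-^{1/2}(u,w)\bigr\|_{L^2_{x,t}}\lesssim\|u\|_{X^{0,b}}\,\|w\|_{X^{0,b}},\qquad b>\tfrac12,
\]
which on a pair of factors gains $|\zeta|^{1/2}|\xi_i-\xi_j|^{1/2}$, where $\zeta$ is the pair's output frequency, together with the $L^4$ and $L^6$ Strichartz estimates $\|u\|_{L^4_{x,t}}\lesssim\|u\|_{X^{0,1/3+}}$ and $\|u\|_{L^6_{x,t}}\lesssim\|u\|_{X^{0,1/2+}}$ for the Airy group. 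Depending on the branch I would either group the four factors into two well-separated pairs $(u_1,u_2)$, $(u_3,u_4)$, apply the bilinear estimate to each, and combine the two resulting $L^2$ functions through H\"older and the intermediate resonance $\mu^3+\nu^3-\xi^3=-3\mu\nu\xi$; or, when one factor dominates, pair it with one other, apply the bilinear estimate to that pair, and place the remaining factors in $L^4$ or $L^6$. In every branch the two half-derivative gains $|\xi_i-\xi_j|^{1/2}$ supplied by $I_-^{1/2}$, the $|\zeta|^{1/2}$ factors from $I^{1/2}$, and the modulation gain from $M$ together absorb $K$; tallying these shows the combined smoothing budget is $\tfrac12\bigl(3\min\{0,s\}+6b'-\tfrac52\bigr)$, which is exactly what forces the stated range of $a$ and makes the gain degenerate to $0$ at $s=-\tfrac16$, $b'=\tfrac12$.

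I expect the main obstacle to be the genuinely resonant high--high--high--high regime, in which all four input frequencies are comparable to a single large scale $N$ while the three products $\xi_1\xi_2\mu$, $\xi_3\xi_4\nu$, $\mu\nu\xi$ nearly cancel, so that no usable power of $M$ is available. There the entire derivative loss in $K$ must be carried by the two $|\xi_i-\xi_j|^{1/2}$ factors from $I_-^{1/2}$, which forces a pairing in which each pair is separated at scale $N$; verifying that such a separated pairing always exists and, when $s<0$, that the negative Sobolev weights on the inputs do not upset the balance, is the delicate bookkeeping that pins the threshold to the sharp value $a<\tfrac12(3\min\{0,s\}+6b'-\tfrac52)$.
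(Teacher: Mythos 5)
Your toolkit is the right one---duality, a frequency/modulation case analysis, Gr\"unrock's bilinear $\I^{\frac12}\I_-^{\frac12}$ smoothing, and Airy Strichartz bounds---and in broad strokes this is also the paper's route. But two structural choices in your sketch would fail as written, and they sit exactly where the sharp constraint comes from. First, the ``two well-separated pairs'' branch cannot close: the bilinear estimate gives $L^2_{x,t}$ control of each pair, so after H\"older the fifth function (the dual function $v$) would have to be placed in $L^\infty_{x,t}$, which an $X^{-s-a,b'}$ function does not provide. The paper never pairs twice; in every hard case it uses exactly one bilinear pair---always the largest input frequency with the smallest, gaining $|\xi_1-\xi_4|^{\frac12}|\xi_1+\xi_4|^{\frac12}$---and distributes the remaining three functions into mixed norms $L^p_tL^q_x$ chosen so that the dual function is handled using only its $\lb\sigma\rb^{b'}$ weight, either through the time-Sobolev embedding $X^{0,b'}\hookrightarrow L^{\frac{2}{1-2b'}-}_tL^2_x$ (Case 2a) or by interpolating the estimate \eqref{eq:G} down from $X^{0,\frac12+}$ to $X^{0,b'}$ (Case 2b). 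Your sketch never says how the dual function, which has strictly less than half a derivative of modulation regularity, is to be put in any Lebesgue norm at all; this is the central difficulty, not a detail.

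Second, you have misidentified the enemy. The binding constraint $a<\frac12(3\min\{0,s\}+6b'-\frac52)$ does \emph{not} come from the resonant high--high--high--high configuration: in the paper that regime falls under Case 2a and only forces $a<\frac12(3\min\{0,s\}+3b'-1)$, which is strictly weaker whenever $b'<\frac12$. The worst case is the high--low interaction $|\xi_0|\approx|\xi_1|\gg|\xi_2|,|\xi_3|,|\xi_4|$ (Case 2b), where the derivative lands on the output frequency. There the bilinear pair $(u_1,u_4)$ does recover the full derivative, but the dual function now sits at the highest frequency and must itself go into $L^4_tL^\infty_x$ while carrying only $b'$ of modulation regularity; interpolating \eqref{eq:G} against the trivial $L^2$ bound to reach $X^{0,b'}$ is precisely what produces the number $6b'-\frac52$. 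Since your final step---``tallying these shows the combined smoothing budget is exactly $\frac12(3\min\{0,s\}+6b'-\frac52)$''---is asserted rather than computed, and the case you single out for that tally would in fact yield a different (weaker) threshold, the proposal as it stands has a genuine gap: it cannot reach the stated range of $a$ without the Case 2b mechanism, which it neither identifies nor supplies.
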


We also need to show that the nonlinear term is in an appropriate temporal Sobolev space. This is established via the following estimate on the Duhamel integral term. The proof is in Section \ref{duhamelEstProof}.
\begin{proposition} \label{duhamelEst}
 For $b' \in (0, \frac12)$, we have 
\begin{multline*}
  \left\| \eta(t) \int_0^t W^{t-t'}_\R [N(x,t')] \d t'  \right\|_{L^\infty_x H^\frac{s+1}{3}_t} 
\lesssim  \\
\begin{cases}
\| N \|_{X^{s,-b'}} &\text{ if } -1 \leq s \leq 2 - 3b' \\
\| N \|_{X^{s,-b'}} + \left\| \int \chi_R(\xi, \tau) \lb \tau - \xi^3 \rb^{\frac{s-2}{3}} | \hat{N}(\xi,\tau)| \d \xi \right\|_{L^2_\tau} &\text{ if }  s > 2-3b',
\end{cases}
\end{multline*}
where $R$ is the set $R = \bigl\{ (\xi,\tau) \; : \; |\tau| \geq 1 \text{ and } |\tau | \gg |\xi|^3\bigr\}$. 
\end{proposition}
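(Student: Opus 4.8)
The plan is to move to the Fourier side, where the inhomogeneous Airy evolution becomes a standard oscillatory multiplier, and then to reduce everything to the Kato smoothing estimate of Lemma~\ref{kato} except for one genuinely inhomogeneous piece. Taking the spatial Fourier transform and carrying out the $t'$-integral, one obtains the familiar identity
\[
\mathcal{F}_x\Bigl[\int_0^t W_\R^{t-t'}N(\cdot,t')\,\d t'\Bigr](\xi,t)=\int \hat{N}(\xi,\tau)\,\frac{e^{it\tau}-e^{it\xi^3}}{i(\tau-\xi^3)}\,\d\tau .
\]
Writing $\lambda=\tau-\xi^3$, I would split the $\tau$-integral into the low-modulation region $|\lambda|\lesssim1$ and the high-modulation region $|\lambda|\gtrsim1$, and in the latter separate the two exponentials. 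This yields three contributions: a low-modulation piece, a high-modulation piece carrying the factor $e^{it\xi^3}$, and a high-modulation piece carrying $e^{it\tau}$. The first two are Airy evolutions (up to innocuous smooth temporal factors) and will be handled directly by Lemma~\ref{kato}; the third is the source of the dichotomy in the statement.

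For the low-modulation piece I would Taylor-expand $\frac{e^{it\lambda}-1}{i\lambda}=\sum_{k\ge1}\frac{i^{k-1}t^k\lambda^{k-1}}{k!}$, turning this contribution into $\sum_k c_k\,[\eta(t)t^k]\,W_\R^t N_k$ with $\hat{N}_k(\xi)=\int_{|\lambda|\lesssim1}\lambda^{k-1}\hat{N}(\xi,\tau)\,\d\tau$. Since each $\eta(t)t^k$ is again a compactly supported bump, Lemma~\ref{kato} applies termwise, and $\|N_k\|_{H^s}\lesssim\|N\|_{X^{s,-b'}}$ follows from Cauchy--Schwarz on the bounded set $|\lambda|\lesssim1$, the factorial guaranteeing summability. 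The high-modulation $e^{it\xi^3}$ piece equals $-\eta(t)W_\R^t N_B$ with $\hat{N}_B(\xi)=\int_{|\lambda|\gtrsim1}\frac{\hat{N}(\xi,\tau)}{i\lambda}\,\d\tau$, so Lemma~\ref{kato} again reduces matters to $\|N_B\|_{H^s}\lesssim\|N\|_{X^{s,-b'}}$; this in turn follows from Cauchy--Schwarz once $\int_{|\lambda|\gtrsim1}\lb\lambda\rb^{2b'-2}\,\d\tau<\infty$, which is precisely where the hypothesis $b'<\frac12$ enters. Both pieces are controlled by $\|N\|_{X^{s,-b'}}$ for every $s\ge-1$, and so never produce the correction term.

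The heart of the matter is the high-modulation $e^{it\tau}$ piece $D_A$. For fixed $x$ the function $\tau\mapsto G(x,\tau):=\int_{|\lambda|\gtrsim1}e^{ix\xi}\frac{\hat{N}(\xi,\tau)}{i\lambda}\,\d\xi$ is, up to constants, the temporal Fourier transform of $D_A(x,\cdot)$, so that of $\eta\,D_A(x,\cdot)$ is $\hat{\eta}\ast G(x,\cdot)$. Bounding the $x$-dependence away through $|G(x,\tau)|\le\tilde{G}(\tau):=\int_{|\lambda|\gtrsim1}\frac{|\hat{N}(\xi,\tau)|}{|\lambda|}\,\d\xi$, which is independent of $x$, and using that $\hat{\eta}$ is Schwartz (Peetre's inequality to pull the weight $\lb\tau\rb^{(s+1)/3}$ through the convolution, then Young), I would bound the full $L^\infty_xH^{(s+1)/3}_t$-norm of $\eta D_A$ by $\bigl\|\lb\tau\rb^{(s+1)/3}\tilde{G}\bigr\|_{L^2_\tau}$. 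A Cauchy--Schwarz in $\xi$ against the $X^{s,-b'}$-density then gives the bound $\sup_\tau M(\tau)^{1/2}\,\|N\|_{X^{s,-b'}}$, where
\[
M(\tau)=\lb\tau\rb^{\frac{2(s+1)}{3}}\int_{|\lambda|\gtrsim1}\lb\xi\rb^{-2s}\,\lb\lambda\rb^{2b'-2}\,\d\xi .
\]

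The main obstacle, and the only place the two cases arise, is the calculus estimate for $M(\tau)$, which I would carry out by splitting the $\xi$-integral into regions. Near the stationary point $\xi\approx\tau^{1/3}$ the substitution $u=\xi^3$ shows the contribution is $\approx\lb\tau\rb^{-2(s+1)/3}$, hence $O(1)$ in $M$ for every $s$; the region $|\xi|^3\gg|\tau|$ contributes $\lesssim\lb\tau\rb^{2b'-1}\to0$ since $b'<\frac12$. The only remaining region is exactly $R=\{|\tau|\ge1,\ |\tau|\gg|\xi|^3\}$, on which $\lb\lambda\rb\approx\lb\tau\rb$ and a direct computation gives an $M$-contribution bounded by a power $\lb\tau\rb^{\frac{2(s+1)}{3}+2b'-2}$; this is uniformly bounded precisely when $s\le2-3b'$, yielding the first case. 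When $s>2-3b'$ I would not estimate the $R$-portion of $\tilde{G}$ at all: on $R$ one has $\lb\tau\rb^{(s+1)/3}\tilde{G}_R(\tau)\approx\int_R\lb\lambda\rb^{\frac{s-2}{3}}|\hat{N}(\xi,\tau)|\,\d\xi$, which is exactly the correction term in the statement, while the complementary region is still absorbed into $\|N\|_{X^{s,-b'}}$. The delicate points are tracking the exact powers of $\lb\tau\rb$ in each region and ruling out logarithmic losses at the transitions between them.
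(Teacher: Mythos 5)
Your proposal is correct and follows essentially the same route as the paper's proof: the same Duhamel/Fourier identity, the same splitting into low modulation (handled by Taylor expansion) and high modulation with the two exponentials separated, Cauchy--Schwarz against the $X^{s,-b'}$ weight, and the same sup-in-$\tau$ calculus estimate whose failure for $s>2-3b'$ on the region $R$ produces exactly the correction term. The only differences are organizational --- you invoke the Kato smoothing estimate of Lemma \ref{kato} for the $e^{it\xi^3}$ pieces where the paper computes directly after the substitution $w=\xi^3$, and you split integration regions where the paper splits the weight via $\lb \tau \rb \lesssim \chi_R(\xi,\tau)\lb \tau - \xi^3\rb + \lb \xi \rb^3$ --- and these do not change the substance.
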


Finally, we'll require an estimate for the correction term which appears in the above proposition. The proof of this statement is found in Section \ref{correctionEstProof}. 

\begin{proposition} \label{correctionEst}
 Let $N(x,t) = \partial_x \left( u_1 u_2 u_3 u_4 \right)$, and define the set $R$ as in the previous proposition: $R = \bigl\{ (\xi,\tau) \; : \; |\tau| \geq 1 \text{ and } |\tau | \gg |\xi|^3\bigr\}$. Then for $b > \frac12$ and $0 \leq a < \min\{ \frac12, 2-s\}$, we have
 \[ \left\| \int \chi_R(\xi, \tau) \lb \tau - \xi^3 \rb^{\frac{s+a-2}{3}} | \hat{N}(\xi,\tau)| \d \xi \right\|_{L^2_{\tau}} \lesssim \prod_{j=1}^4 \| u_j\|_{X^{s,b}}. \]
\end{proposition}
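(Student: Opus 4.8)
The plan is to exploit two elementary consequences of the definition of $R$: for $(\xi,\tau) \in R$ we have $\lb \tau - \xi^3 \rb \approx \lb \tau \rb$ and $\lb \xi \rb \lesssim \lb \tau \rb^{1/3}$, since $|\xi|^3 \ll |\tau|$ and $|\tau| \geq 1$. Writing $w = u_1 u_2 u_3 u_4$, so that $\hat{N}(\xi,\tau) = i\xi\, \hat{w}(\xi,\tau)$, the negative power of the modulation can be frozen in $\tau$ on $R$. First I would bound the left-hand side by
\[ \left\| \lb \tau \rb^{\frac{s+a-2}{3}} \int_{|\xi| \lesssim \lb \tau \rb^{1/3}} |\xi|\, |\hat{w}(\xi,\tau)| \d \xi \right\|_{L^2_\tau}. \]
The only remaining issue is then to convert this $L^1_\xi$ integral into the $L^2_\xi$ quantity appearing in an $X^{s,\cdot}$ norm.

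To do this I would apply Cauchy--Schwarz in $\xi$ with the weight $\lb \xi \rb^s$, writing $|\xi|\,|\hat{w}| = \bigl( |\xi| \lb \xi \rb^{-s} \bigr)\bigl( \lb \xi \rb^{s} |\hat{w}| \bigr)$. This produces the constant $\bigl( \int_{|\xi| \lesssim \lb \tau \rb^{1/3}} |\xi|^2 \lb \xi \rb^{-2s} \d \xi \bigr)^{1/2}$, which is $\approx \lb \tau \rb^{(3-2s)/6}$ when $s < \tfrac32$ and $\approx 1$ when $s > \tfrac32$ (with a harmless logarithm at $s = \tfrac32$ that is absorbed below). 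Collecting the powers of $\lb \tau \rb$ and using $\lb \tau \rb \approx \lb \tau - \xi^3 \rb$ once more on the domain of integration, the square of the whole expression is controlled by
\[ \iint \lb \tau - \xi^3 \rb^{2\beta} \lb \xi \rb^{2s} |\hat{w}(\xi,\tau)|^2 \d \xi \d \tau = \|w\|_{X^{s,\beta}}^2, \]
where $\beta = \frac{2a-1}{6}$ when $s < \tfrac32$ and $\beta = \frac{s+a-2}{3}$ when $s > \tfrac32$. The extension from the domain $\{ |\xi| \lesssim \lb \tau \rb^{1/3}\}$ to all of $\R^2$ is legitimate only after the weight has been rewritten in terms of $\lb \tau - \xi^3 \rb$, and is then justified by positivity of the integrand. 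The role of the hypothesis $0 \leq a < \min\{\tfrac12, 2-s\}$ is now transparent: in each regime it is exactly equivalent to $\beta < 0$, and the two expressions for $\beta$ agree at $s = \tfrac32$.

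It remains to prove the product estimate
\[ \|u_1 u_2 u_3 u_4\|_{X^{s,\beta}} \lesssim \prod_{j=1}^4 \|u_j\|_{X^{s,b}} \qquad (\beta < 0,\ b > \tfrac12,\ s > -\tfrac16), \]
and this is the step I expect to be the main obstacle. I would establish it by the same $L^2$ multilinear (bilinear Airy / $L^6$--Strichartz) machinery that drives the proof of Proposition \ref{nonlin} in Section \ref{nonlinProof}: after dualizing, the claim becomes a five-linear $L^2_{\xi,\tau}$ convolution bound, controlled by the standard case analysis on the relative sizes of the four input modulations $\lb \tau_j - \xi_j^3 \rb$ and the largest input frequency, using the resonance identity for $\sum \xi_j^3 - (\sum \xi_j)^3$. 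Crucially, no derivative falls on the output here: the spatial derivative of the nonlinearity was already spent in the first step to produce the gain $|\xi| \lesssim \lb \tau \rb^{1/3}$, so the estimate is genuinely subcritical at $s = -\tfrac16$ (the scaling-critical regularity $-\tfrac16$ is dictated by the \emph{derivative} nonlinearity), and the strictly negative output weight $\beta < 0$ only helps. Consequently the product estimate holds, with room to spare, throughout the stated range of $s$, which completes the proof. The delicate points to monitor are the low-frequency output interactions and the endpoint behaviour as $s \searrow -\tfrac16$, both handled exactly as in the proof of Proposition \ref{nonlin}.
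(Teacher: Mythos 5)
Your reduction is correct, and it is in essence the paper's own computation in a different wrapper: the paper also exploits $\lb \tau - \xi^3 \rb \approx \lb \tau \rb$ on $R$, also applies Cauchy--Schwarz in $\xi$ against the weight $|\xi|\lb \xi \rb^{-s}$, and also lands on exactly your exponent count $\frac{s+a-2}{3} + \frac{\max\{3-2s,0\}}{6} < 0 \iff a < \frac12$ and $s+a<2$. The organizational difference is that the paper never introduces the intermediate norm $\|u_1u_2u_3u_4\|_{X^{s,\beta}}$: it performs one Cauchy--Schwarz in the dual formulation (pairing against $h \in L^2_\tau$), which lands directly on $\| v_1v_2v_3v_4\|_{L^2_{x,t}}$ with $\hat{v_j} = \lb \tau - \xi^3\rb^{-b} f_j$, the frequency weights having been discarded via $\prod_j \lb \xi_j \rb^{-s} \lesssim \lb \xi_0 \rb^{-s}$ (valid since $s>0$), and then concludes by H\"older $L^2 = L^8\cdot L^8\cdot L^8 \cdot L^8$ and the Strichartz embedding \eqref{eq:L8}.

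The weak point is your deferred product estimate, which is the entire analytic content of your argument and is only sketched. Two things need to be said. First, in the range where this proposition is actually proved and used ($0 < s < 2$; the paper invokes it only for $\frac12 < s < 2$, and its proof explicitly assumes $s>0$), the estimate $\|u_1u_2u_3u_4\|_{X^{s,\beta}} \lesssim \prod_j \|u_j\|_{X^{s,b}}$ is true, but the proof is not the ``case analysis on modulations with the resonance identity'' you propose --- none of that is needed. It is three lines: dualize; use $\lb \xi_0 \rb^s \lesssim \prod_j \lb \xi_j \rb^s$ (valid for $s \geq 0$) and $\lb \tau_0 - \xi_0^3 \rb^{\beta} \leq 1$ (valid since $\beta < 0$); then H\"older and \eqref{eq:L8}, exactly as in the paper. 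Second, and more seriously, your claim that the estimate holds for all $s > -\frac16$ ``with room to spare'' because no derivative falls on the output is backwards. For negative $s$ the estimate \emph{without} the derivative can fail: take $\hat{u_j} = \chi_{B_+} + \chi_{B_-}$ with $B_\pm$ unit boxes around $(\pm N, \pm N^3)$. The two-plus/two-minus interactions produce output at frequencies $|\xi_0| \lesssim 1$, where the resonance function $\sum_j \xi_j^3 - \xi_0^3$ is only $O(N^2\xi_0) + O(N)$, so there is no modulation gain; a computation gives $\|u^4\|_{X^{s,\beta}} / \|u\|_{X^{s,b}}^4 \gtrsim N^{\frac{4\beta - 1}{2} - 4s} \to \infty$ whenever $s < \frac{4\beta-1}{8}$, e.g.\ for any $s < -\frac18$ once $\beta$ is close to $0^-$ (which is your $\beta = \frac{2a-1}{6}$ with $a$ near $\frac12$). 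The factor $|\xi_0|$ from $\partial_x$ is precisely what suppresses this low-output-frequency resonant interaction in Proposition \ref{nonlin}; removing it makes negative regularity harder, not easier. So your proof is salvageable exactly on the range where the proposition lives, provided you replace the sketch by the H\"older/$L^8$ argument (or restrict your product estimate to $s \geq 0$), and the claimed extension to $s > -\frac16$ should be deleted.
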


\section{Local Theory} \label{locTheory}

We wish to find a solution to the nonlinear gKdV-3 problem
\begin{equation*}
\begin{cases}
u_t + u_{xxx} + (u^4)_x = 0, \quad (x,t) \in \R^+ \times \R^+, \\
u(x, 0) = u_0(x) \in H^{s}(\R^+), \\
u(0,t) = g(t) \in H^\frac{s+1}{3}(\R^+). 
\end{cases}
\end{equation*}
Begin by solving the linear equation
\begin{equation*}
\begin{cases}
L_t + L_{xxx}  = 0, \quad (x,t) \in \R^+ \times \R^+, \\
L(x, 0) = u_0(x) \in H^{s}(\R^+), \\
L(0,t) = g(t) \in H^\frac{s+1}{3}(\R^+).
\end{cases}
\end{equation*}
We obtain the solution $L$ for $t \in [0,1]$ by using the forumula \eqref{eq:fullairy}. 
Thus $ L$ lies in the space 
\[ C^0_tH^s_x \cap C^0_x H^\frac{s+1}3_t \cap X^{s, \frac12} \cap X^{s-3\epsilon, \frac12 + \epsilon}\]
for $\epsilon > 0$. To see this, note that $\eta(t) W^t_\R u_0(x) \in X^{s,b}$ for any $b$ by Lemma \ref{RlinXsb} and $\eta(t) W^t_\R u_0(x) \in C^0_tH^s_x \cap C^0_x H^\frac{s+1}3_t$ by Lemma \ref{kato}. We also have by Lemma \ref{kato} that $\eta(t)\bigl[ W_\R^t u_0(x)\bigr]_{x=0} \in H^\frac{s+1}3_t$. Hence the $W^t_0$ term above is in $X^{s - 3\epsilon, \frac12 + \epsilon}$ for any $\epsilon \geq 0$ by Lemma \ref{linXsb}, and in $ C^0_tH^s_x \cap C^0_x H^\frac{s+1}3_t$ by Lemma \ref{linHs}. It is important to note that we can put this linear flow in a Fourier restriction space $X^{s_0,b}$ with $b > 1/2$ as long as we are willing to accept some loss of regularity; i.e. if we can work with $s_0 < s$. The nonlinear smoothing effect will enable us to close the argument despite this small regularity loss.

Now let $v := u-L$, so that $v$ solves the difference equation
\begin{equation*}
\begin{cases}
v_t + v_{xxx} + \partial_x[(v+L)^4] = 0, \quad (x,t) \in \R^+ \times \R^+, \\
v(x, 0) = 0 \in H^{s}(\R^+), \\
v(0,t) = 0 \in H^\frac{s+1}{3}(\R^+). 
\end{cases}
\end{equation*}
We will show that a solution $v$ to this difference equation exists, which will imply existence of solution $v + L$ to the original 3--gKdV. We proceed with a contraction argument in the space $X^{s+a, \frac12 + \epsilon}$. 

The Duhamel formulation is as follows. A solution $v$ satisfies $v = \Phi(v)$ on a time interval $[0,T]$, with $0< T <1$, where 
\begin{equation} \label{eq:duhamelForm}
\Phi \bigl(v(x,t)\bigr) = \eta_T(t) \int_0^t W_\R^{t-t'}  \bigl[G(v,L)\bigr] \d t' - \eta_T(t) W^t_0(0,h),
\end{equation}
with
\begin{equation}\label{eq:nonlinearity} 
\begin{split}
&G(v,L) = \eta_T(t) \partial_x[(v+L)^4] 
\\
&h(t) = \eta_T(t) \left[ \int_0^t W_\R^{t-t'} \bigl[G(v,L)\bigr] \d t'  \right]_{x=0}.\end{split}
\end{equation}

Suppose first that $s + a \leq \frac12$. Using Lemma \ref{RduhamelXsb} and Lemma \ref{linXsb}, and then Lemma \ref{powerT} and Lemma \ref{duhamelEstProof}, and finally Lemma \ref{powerT} again, we obtain
\begin{align*}
 \| \Phi(v) \|_{X^{s+a, \frac12 + \epsilon}} & \lesssim \| G(v,L) \|_{X^{s+a,-\frac12 + \epsilon}} + \| h \|_{H^{\frac{s+a+1}3 + \epsilon}} \\
 &\lesssim T^\epsilon \| G(v,L) \|_{X^{s+a,-\frac12 + 2\epsilon}} + \| G(v,L)\|_{X^{s+a+3\epsilon, -\frac12 + \epsilon}} \\
 &\lesssim T^\epsilon \| G(v,L)\|_{X^{s + a + 3\epsilon, -\frac12 + 2 \epsilon}}.
\end{align*}
We now apply Lemma \ref{nonlin} with $b' = \frac12 - 2 \epsilon$, $\tilde{s} = s - 3\epsilon$, and $\tilde{a} = a + 6 \epsilon$ to conclude that 
\begin{align*}
 \| G(v,L)\|_{X^{s + a + 3\epsilon, -\frac12 + 2 \epsilon}} &\lesssim \left( \| v \|_{X^{s-3\epsilon, \frac12 +\epsilon}} + \|L \|_{X^{s - 3 \epsilon, \frac12 + \epsilon}} \right)^4  
 \\
 &\lesssim \left( \| v \|_{X^{s, \frac12 +\epsilon}} + \|u_0\|_{H^s_x} + \|g\|_{H^{\frac{s+1}3}_t} \right)^4 .\end{align*}
The hypotheses of Lemma \ref{nonlin} are satisfied as long as $s > -\frac16 + 7\epsilon$ and $a < \frac14 + \frac32\min\{0,s\} -\frac{33}2\epsilon \leq \frac14 + \frac32 \min\{0,s-3\epsilon\} - 12 \epsilon$. Thus if we choose $T$ sufficiently small, we may close the contraction on a time interval $[0,T]$. We may also conclude that $v \in C^0_xH^\frac{s+a+1}3_t$ by Lemmata \ref{duhamelEst}, \ref{nonlin}, and \ref{linHs}. So we obtain a solution 
\[ u = L+ v  \in   
\Bigl[ C^0_tH^s_x \cap C^0_x H^\frac{s+1}3_t \cap X^{s, \frac12} \Bigr]
+
\Bigl[ C^0_tH^{s+a}_x \cap C^0_x H^\frac{s+a+1}3_t \cap X^{s+a, \frac12+} \Bigr] 
. \]

For $\frac12 < s < 2$, the argument is essentially the same. However, when estimating the Duhamel term using Proposition \ref{duhamelEst}, the $L^2_\tau$ correction term appears. This is controlled using Proposition \ref{correctionEst} and we proceed as above. Notice that the total solution $u$ satisfies
\[ u \in  \Bigl[ C^0_tH^s_x \cap C^0_x H^\frac{s+1}3_t \cap X^{s, \frac12} \Bigr]. \]

\section{Uniqueness} \label{uniq}

In this section, we discuss uniqueness of solutions to the 3-gKdV initial boundary value problem. We first consider the case of relatively smooth data. 
Suppose $u_1$ and $u_2$ are two solutions to the 3-gKdV \eqref{eq:3gkdv} with the same initial and boundary data. Then the difference $u := u_1 - u_2$ satisfies the equation
\[ u_t + u_{xxx} + (u_1^4 - u_2^4)_x = 0,\]
with zero initial and boundary data. 
Then, using integration by parts and the zero boundary condition, we have
\begin{align*}
 \partial_t \| u(\cdot, t) \|_{L^2(\R^+)}^2 &= - 2 \int_0^\infty uu_{xxx} + u(u_1^4 - u_2^4)_x \d x \\
 &= 2\int_0^\infty u_xu_{xx} - u (u_1^4 - u_2^4)_x \d x \\
 &= - u_x(0,t)^2 -  2\int_0^\infty  u (u_1^4 - u_2^4)_x \d x  \\
 &\leq -  2\int_0^\infty  u (u_1^4 - u_2^4)_x \d x \\
 &= -2\int_0^\infty  u \Bigl[u(u_1^3 + u_1^2 u_2 + u_1u_2^2 + u_2^3)\Bigr]_x \d x \\
 &= -\int_0^\infty  u^2 \Bigl[u_1^3 + u_1^2 u_2 + u_1u_2^2 + u_2^3\Bigr]_x \d x.
\end{align*}
Thus by Sobolev embedding we see that $\partial_t \| u(\cdot, t) \|_{L^2(\R^+)}^2$ can be bounded by 
\begin{multline*} 
  \| u \|^2_{L^2(\R^+)} \Bigl(\|u_1\|_{H^{\frac32+}(\R^+)} \| u_1\|_{H^{\frac12+}(\R^+)}^2 + \|u_2\|_{H^{\frac32+}(\R^+)} \| u_2\|_{H^{\frac12+}(\R^+)}^2 \Bigr) \\
 \lesssim \| u \|^2_{L^2(\R^+)}, 
\end{multline*}
where the last inequality holds if $u_1$ and $u_2$ are bounded in $H^{\frac32+}$. Gr\"onwall's inequality then implies that $\| u(\cdot, t)\|_{L^2} = 0$ for all $t$, and hence $u_1 = u_2$. This concludes the proof of uniqueness for solutions in $H^{\frac32+}_x$. 

It remains to consider uniqueness for rougher solutions. For that we use a variant of the argument in \cite{CT}. Suppose we have  initial and boundary data
\[ (u_0, g) \in  H^{s_0}(\R^+) \times H^{\frac{s_0+1}{3}}(\R^+).\]
Suppose first that $s_0 \in (\frac54, \frac32)$. In addition suppose $u_{0,e}$ and $\wt{u_{0,e}}$ are two $H^{s_0}(\R)$ extensions of $u_0$. 

Let $u$ and $\wt{u}$ be the corresponding solutions to the fixed point equation. We wish to show that $u$ and $\wt{u}$ are equal on $\R^+_x$, at least for some short time.

Take a sequence ${u_{0,k}}$ in $H^{\frac32+}(\R^+)$ which converges to $u_0$ in $H^{s_0}(\R^+)$. Let $u_{0,e,k}$ and $\wt{u_{0,e,k}}$ be $H^{\frac32+}(\R)$ extensions of $u_{0,k}$ which converge to $u_{0,e}$ and $\wt{u_{0,e}}$ respectively in $H^{\frac12-}(\R)$. Such extensions exist by Lemma \ref{approx_lemma} below.

Using the local theory in $H^{\frac32+}$, we arrive at corresponding sequences of solutions $u_k$ and $\wt{u_k}$. Since their initial data is equal on the right half line, the uniqueness result above implies that $u_k$ and $\wt{u_k}$ are equal on $\R^+$ on their common interval of existence. Furthermore, $u_k$ converges to $u$ and $\wt{u_k}$ converges to $\wt{u}$ in $H^{\frac12-}$ as $k$ increases, by the local well--posedness result we established in Section \ref{locTheory}. Thus, if the common interval of existence does not vanish as $k$ increases, we will have uniqueness. 

\emph{A priori}, the interval of existence is inversely proportional to the $H^{\frac32+}$ norm of the initial data (as well as the norm of the boundary data). This norm is growing as $k$ increases. This means that the time of existence goes to zero as $k \to \infty$. However, using the smoothing, we can take the time of existence proportional to the data in the $H^{s_0}$ norm, which is bounded as desired. This works directly for $s_0 > \frac54$. Iterating the argument, we obtain uniqueness for $s_0 > -\frac16$.

\begin{lemma}{\cite{ET1}}\label{approx_lemma}
 Fix $-\frac12 < s_0 < \frac12$ and $k > s_0$. Let $u_0 \in H^{s_0}(\R^+)$ and $f \in H^k(\R^+)$. Let $u_{0,e}$ be an $H^{s_0}$ extension of $u_0$ to $\R$. Then there is an $H^k$ extension $f_e$ of $f$ to $\R$ such that 
 \[ \| u_{0,e} - f_e \|_{H^r(\R)} \lesssim \| u_0-f\|_{H^s(\R^+)} \quad \text{ for } r < s_0. \] 
\end{lemma}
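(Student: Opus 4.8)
The plan is to build $f_e$ as a fixed high-regularity extension of $f$ corrected by a smooth function supported on the negative half-line, exploiting that for $|r|<\frac12$ the $H^r(\R)$ norm decouples across the origin. We may assume $-\frac12<r<s_0$; the range $r\le-\frac12$ then follows from this case by monotonicity of Sobolev norms applied to the difference $u_{0,e}-f_e$, which the construction will place in $H^{s_0}(\R)$.

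First I would fix a bounded linear extension operator $E\colon H^k(\R^+)\to H^k(\R)$ and set $f_0=Ef$, so that $\|f_0\|_{H^k(\R)}\lesssim\|f\|_{H^k(\R^+)}$ and $f_0|_{\R^+}=f$. Put $v:=u_{0,e}-f_0\in H^{s_0}(\R)$ and note $v|_{\R^+}=u_0-f=:w$. Every admissible candidate is of the form $f_e=f_0+\phi$ with $\phi\in H^k(\R)$ supported in $(-\infty,0)$, so that $f_e|_{\R^+}=f$ automatically and $u_{0,e}-f_e=v-\phi$. The whole problem is thus to choose such a $\phi$ making $\|v-\phi\|_{H^r(\R)}$ small.

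Next I would split using the multiplier $\chi$. Since $|r|<\frac12$, multiplication by $\chi$ and by $1-\chi$ is bounded on $H^r(\R)$, whence $\|v-\phi\|_{H^r(\R)}\approx\|\chi(v-\phi)\|_{H^r(\R)}+\|(1-\chi)(v-\phi)\|_{H^r(\R)}$. Because $\phi$ is supported in $(-\infty,0)$, the first term is $\|\chi v\|_{H^r(\R)}$, which is the extension by zero of $w$; boundedness of extension by zero for $|r|<\frac12$ together with $r<s_0$ gives $\|\chi v\|_{H^r(\R)}\lesssim\|w\|_{H^r(\R^+)}\le\|w\|_{H^{s_0}(\R^+)}$, a quantity depending only on $w=u_0-f$ and not on the uncontrolled behavior of $u_{0,e}$ on $\R^-$. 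For the second term, $(1-\chi)(v-\phi)=(1-\chi)v-\phi$, where $(1-\chi)v\in H^{s_0}(\R)\subset H^r(\R)$ is supported in $(-\infty,0]$. Using that, for $|r|<\frac12$, smooth functions with compact support in $(-\infty,0)$ are dense in the closed subspace of $H^r(\R)$ consisting of distributions supported in $(-\infty,0]$, I choose $\phi$ in that class approximating $(1-\chi)v$ so that $\|(1-\chi)v-\phi\|_{H^r(\R)}$ is as small as desired; such a $\phi$ automatically lies in $H^k(\R)$. Collecting the two pieces yields $\|u_{0,e}-f_e\|_{H^r(\R)}\lesssim\|u_0-f\|_{H^{s_0}(\R^+)}$, with the negative-axis error made negligible.

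The main obstacle is conceptual rather than computational: one must reconcile that $f_e$ is genuinely $H^k$-regular while remaining $H^r$-close to the merely $H^{s_0}$-rough datum $u_{0,e}$ on the negative half-line. This is possible precisely because $r$ is a weaker norm, and the mechanism is the density of smooth $\R^-$-supported functions in the weak-norm space, combined with the decoupling of the two half-lines; both facts hinge on $|r|<\frac12$. The delicate step is verifying that the $H^r$ norm truly splits across the origin, so that the positive-axis contribution is controlled by $w=u_0-f$ alone, while the entire discrepancy between $f_e$ and $u_{0,e}$ on $\R^-$ is handled by the approximation $\phi$ rather than by any quantitative bound on $u_{0,e}$ itself.
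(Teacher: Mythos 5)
The paper itself does not prove this lemma; it is imported from \cite{ET1}, so there is no internal proof to compare against. Judged on its own merits, your argument is correct, and it is essentially the standard mechanism behind the cited result: fix one $H^k$ extension of $f$ and correct it on the negative half-line, using (i) that the $\R^+$-part of the discrepancy is exactly an extension by zero of $u_0-f$, hence controlled via boundedness of multiplication by $\chi$ on $H^r$ for $|r|<\tfrac12$, and (ii) that the $\R^-$-supported part can be approximated arbitrarily well, in the weaker norm, by smooth functions supported in $(-\infty,0)$.

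Three points to tighten. First, as written your $\phi$ depends on $r$, whereas the lemma asserts a single $f_e$ valid for every $r<s_0$; this is fixed by approximating $(1-\chi)v$ in the $H^{s_0}$ norm itself (legitimate, since $|s_0|<\tfrac12$ makes both the density statement and the $\chi$-multiplier bound available at the index $s_0$), after which monotonicity of Sobolev norms yields all $r\le s_0$ simultaneously. Second, the density fact deserves a one-line justification: translate $(1-\chi)v$ left by $\delta$ (translations are strongly continuous on every $H^r$) and mollify at scale $\delta/2$; the result is supported in $(-\infty,-\delta/2]$ and lies in $H^\infty\subset H^k$, so compact support of $\phi$ is neither needed nor relevant. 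Third, your construction yields $\|u_{0,e}-f_e\|_{H^r(\R)}\le C\|u_0-f\|_{H^{s_0}(\R^+)}+\epsilon$ with $\epsilon$ arbitrary; when $u_0\neq f$, taking $\epsilon=\|u_0-f\|_{H^{s_0}(\R^+)}$ gives the stated bound, while in the degenerate case $u_0=f$ the homogeneous bound is actually unattainable in general (it would force $u_{0,e}=f_e\in H^k(\R)$, though $u_{0,e}$ is only assumed to lie in $H^{s_0}$). So the $\epsilon$-version your proof establishes is the correct content of the lemma, and it is all that the uniqueness iteration in Section \ref{uniq} requires, since there one only needs the approximating extensions $u_{0,e,k}$ to converge to $u_{0,e}$ in $H^{\frac12-}(\R)$.
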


\section{Proofs} \label{proofs}

Before proceeding, we state a calculus lemma which will useful. For proofs of similar results, see \cite{ET2}.
\begin{lemma}\label{calcEst}
 If $\alpha > 1$ and $\alpha \geq \beta \geq 0 $, then  
\[\int_\mathbb{R} \lb y - a \rb^{-\alpha} \lb y-b \rb^{-\beta} \d y \lesssim \lb a - b\rb ^{- \beta}. \] 
\end{lemma}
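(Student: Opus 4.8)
The plan is to exploit that the integrand concentrates near the two centres $y=a$ and $y=b$, and that the hypotheses give enough summability ($\alpha>1$, hence also $\alpha+\beta>1$, together with $\alpha\ge\beta$) to produce the gain $\lb a-b\rb^{-\beta}$. First I would normalize the problem: the substitution $y\mapsto y+b$ turns the integral into $\int_\R \lb y-(a-b)\rb^{-\alpha}\lb y\rb^{-\beta}\,\d y$, and the reflection $y\mapsto -y$ lets me assume $c:=a-b\ge 0$, since $\lb c\rb=\lb -c\rb$. The target is then $\int_\R \lb y-c\rb^{-\alpha}\lb y\rb^{-\beta}\,\d y\lesssim \lb c\rb^{-\beta}$. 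When $\lb c\rb\approx 1$ this is immediate: discarding the factor $\lb y\rb^{-\beta}\le 1$ leaves $\int_\R\lb y-c\rb^{-\alpha}\,\d y\lesssim 1$ because $\alpha>1$. So I would henceforth assume $c\gtrsim 1$.

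Next I would split $\R$ into three pieces according to proximity to the two centres: the neighbourhood of $b$, namely $\Omega_b=\{|y|\le c/2\}$; the neighbourhood of $a$, namely $\Omega_a=\{|y-c|\le c/2\}$; and the far region $\Omega_\infty$ consisting of everything else. On $\Omega_b$ one has $|y-c|\ge c/2$, so $\lb y-c\rb\gtrsim\lb c\rb$; pulling this factor out leaves $\lb c\rb^{-\alpha}\int_{|y|\le c/2}\lb y\rb^{-\beta}\,\d y\lesssim \lb c\rb^{-\alpha+(1-\beta)_+}$ (with a harmless logarithm at $\beta=1$), and this is $\lesssim\lb c\rb^{-\beta}$ precisely because $\alpha>1$ when $\beta\le 1$ and because $\alpha\ge\beta$ when $\beta>1$. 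On $\Omega_a$ one has $|y|\ge c/2$, so $\lb y\rb\gtrsim\lb c\rb$; pulling out $\lb c\rb^{-\beta}$ leaves the bounded integral $\int\lb y-c\rb^{-\alpha}\,\d y\lesssim 1$, again using $\alpha>1$.

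The crux is the far region $\Omega_\infty$, where $y$ is far from both centres. I expect this to be the main obstacle, and the point is that there the two weights are comparable: since $y$ lies outside a neighbourhood of the segment joining $0$ and $c$, one checks $\lb y-c\rb\approx\lb y\rb\gtrsim\lb c\rb$, so the integrand is dominated by $\lb y\rb^{-\alpha-\beta}$. Because $\alpha+\beta>\alpha>1$ this tail is integrable, and $\int_{|y|\gtrsim c}\lb y\rb^{-\alpha-\beta}\,\d y\lesssim \lb c\rb^{1-\alpha-\beta}\le \lb c\rb^{-\beta}$, the last step again invoking $\alpha>1$. Summing the three contributions gives the claimed bound. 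What must be verified with care is exactly this tail step — that the two bracket factors genuinely become comparable once $y$ leaves the neighbourhood of $[b,a]$, and that it is the hypothesis $\alpha>1$ (equivalently $\alpha+\beta>1$) that converts the combined decay $\lb y\rb^{-\alpha-\beta}$ into the gain $\lb c\rb^{-\beta}$; the only other delicate bookkeeping is the borderline $\beta=1$ and the $\beta\le 1$ versus $\beta>1$ split on $\Omega_b$.
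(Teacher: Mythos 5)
Your proof is correct. Note that the paper itself gives no argument for this lemma---it only points to \cite{ET2} for ``proofs of similar results''---so there is no internal proof to compare against; what you have written is essentially the standard region-splitting argument that such references use, and it is complete: the normalization to $c=a-b\ge 0$, the trivial case $\lb c\rb\approx 1$ via $\alpha>1$, the near-$b$ region with the $(1-\beta)_+$ bookkeeping and the logarithm at $\beta=1$ absorbed by the strict inequality $\alpha>1$, and the near-$a$ and far regions all check out. One remark: your far-region analysis is more elaborate than it needs to be. On all of $\{|y|\ge c/2\}$ (i.e.\ on $\Omega_a\cup\Omega_\infty$ at once) you can simply pull out $\lb y\rb^{-\beta}\lesssim \lb c\rb^{-\beta}$ and bound $\int_\R \lb y-c\rb^{-\alpha}\,\d y\lesssim 1$ using $\alpha>1$; the comparability $\lb y-c\rb\approx\lb y\rb$ on $\Omega_\infty$, which you flag as the delicate point, is never actually needed, and the proof then reduces to a two-region decomposition whose only nontrivial case is the one you already handled on $\Omega_b$.
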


\subsection{Proof of Proposition \ref{nonlin}}\label{nonlinProof}
The following results are used in the proof, and are placed here for convenient reference.
\begin{lemma} We have the following estimates:
\begin{align} \label{eq:trivial}
\| u \|_{L^2_{x,t}} 
&= \| u \|_{X^{0,0}} ,  
& 
\text{(Trivial $L^2_{x,t}$ estimate)}, & \hspace{.5in} \\
\label{eq:L8}
\| u\|_{L^8_{x,t}} 
&\lesssim \| u \|_{X^{0,\frac12+}},  
&  
\text{\cite[Theorem 2.4]{KPV91}},  \\
\label{eq:G}
\| D^{\frac1r} u \|_{L^r_t L^{\frac{2r}{r-4}}_x} &\lesssim \| u \|_{X^{0, \frac12+}}, \quad r \geq 4,
&
\text{\cite[Lemma 2(i)]{G}}, \\
\label{eq:Sobolev}
\| u \|_{ L^\infty_x} &\lesssim \|D^{\frac1q +} u \|_{L^q_x}, \quad q \geq 2,
&
\text{(Sobolev embedding)}.
\end{align}
\end{lemma}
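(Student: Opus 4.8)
The plan is to treat the four estimates independently, since each reduces to a standard tool and only \eqref{eq:L8} and \eqref{eq:G} carry any content. Estimate \eqref{eq:trivial} is Plancherel's theorem: by the definition of the Fourier restriction norm, $\| u \|_{X^{0,0}} = \| \hat{u}(\xi,\tau) \|_{L^2_{\xi,\tau}}$, and Plancherel in both the $x$ and the $t$ variables identifies this with $\| u \|_{L^2_{x,t}}$ (after normalizing $\mathcal{F}$ so the constant is $1$). Estimate \eqref{eq:Sobolev} is the one--dimensional fractional Sobolev embedding $\lb \partial_x \rb^{-(\frac1q+)} L^q(\R) \hookrightarrow L^\infty(\R)$: since $\hat{D^s f}(\xi) = \lb \xi \rb^s \hat{f}(\xi)$, the hypothesis $\| D^{\frac1q+} u \|_{L^q_x} < \infty$ places $u$ in the Bessel potential space of index $\frac1q+$, and the embedding into $L^\infty(\R)$ holds because the smoothing index exceeds the scaling threshold, $\bigl(\tfrac1q+\bigr)\cdot q > 1$, which is valid for all $q \geq 2$.

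The substance is in \eqref{eq:L8} and \eqref{eq:G}, and I would obtain both from the corresponding \emph{linear} Airy Strichartz estimates via the transference principle. The linear inputs are exactly the cited results: $\| W_\R^t f \|_{L^8_{x,t}} \lesssim \| f \|_{L^2_x}$ from \cite[Theorem 2.4]{KPV91}, and $\| D^{\frac1r} W_\R^t f \|_{L^r_t L^{\frac{2r}{r-4}}_x} \lesssim \| f \|_{L^2_x}$ for $r \geq 4$ from \cite[Lemma 2(i)]{G}. To transfer either of these to an $X^{0,b}$ bound with $b = \frac12+$, I would decompose $u$ over the modulation variable $\lambda = \tau - \xi^3$. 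Writing $\hat{f_\lambda}(\xi) := \hat{u}(\xi, \xi^3 + \lambda)$, a change of variables in the inversion formula gives
\[ u(x,t) = \int_\R e^{it\lambda} \bigl( W_\R^t f_\lambda \bigr)(x) \, \d \lambda . \]
Because $|e^{it\lambda}| = 1$ and the spatial multiplier $D^{\frac1r}$ commutes both with $W_\R^t$ and with modulation in $t$, Minkowski's integral inequality moves the ambient space--time norm inside the $\lambda$--integral and the linear estimate applies slice by slice:
\[ \| u \|_{Y} \;\leq\; \int_\R \bigl\| W_\R^t f_\lambda \bigr\|_{Y} \, \d \lambda \;\lesssim\; \int_\R \| f_\lambda \|_{L^2_x} \, \d \lambda , \]
where $Y$ denotes $L^8_{x,t}$ for \eqref{eq:L8} and $\| D^{\frac1r} \,\cdot\, \|_{L^r_t L^{\frac{2r}{r-4}}_x}$ for \eqref{eq:G}.

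To finish, I would apply Cauchy--Schwarz in $\lambda$ against the weight $\lb \lambda \rb^{b}$:
\[ \int_\R \| f_\lambda \|_{L^2_x} \, \d \lambda \;\leq\; \Bigl( \int_\R \lb \lambda \rb^{-2b} \, \d \lambda \Bigr)^{\frac12} \Bigl( \int_\R \lb \lambda \rb^{2b} \| f_\lambda \|_{L^2_x}^2 \, \d \lambda \Bigr)^{\frac12} \;\lesssim\; \| u \|_{X^{0,b}} . \]
The first factor is finite precisely because $b > \frac12$, and the second equals $\| u \|_{X^{0,b}}$ after Plancherel in $\xi$ and the substitution $\tau = \xi^3 + \lambda$, since $\lb \lambda \rb = \lb \tau - \xi^3 \rb$. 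This produces \eqref{eq:L8} and \eqref{eq:G}.

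There is no genuinely hard step here; the estimates are a collection of standard facts assembled for later reference. The only points requiring care are the two that make the transference legitimate: the threshold $b > \frac12$, which is exactly what renders the $\lb \lambda \rb^{-2b}$ integral convergent and forces the ``$+$'' in $X^{0,\frac12+}$; and the commutation of $D^{\frac1r}$ with the $t$--modulation $e^{it\lambda}$, which holds because $D^{\frac1r}$ acts only in $x$ and thus guarantees the slicing is valid for \eqref{eq:G}. The linear Strichartz inputs themselves are quoted from \cite{KPV91} and \cite{G} and are not reproved.
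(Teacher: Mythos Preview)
Your proposal is correct and follows exactly the approach the paper indicates: the paper does not give a detailed proof of this lemma but simply remarks that ``one can upgrade the linear Strichartz estimates into a priori $X^{s,b}$ estimates of the form above, using the definition of the $X^{s,b}$ spaces and Cauchy--Schwarz inequality, see \cite{ETbook}.'' Your transference argument (decompose over the modulation $\lambda = \tau - \xi^3$, apply the cited linear Airy estimates slicewise via Minkowski, then close with Cauchy--Schwarz in $\lambda$, which forces $b>\tfrac12$) is precisely that standard upgrade spelled out in full.
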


We note that one can upgrade the linear Strichartz estimates into a priori $X^{s,b}$ estimates of the form above, using the definition of the $X^{s,b}$ spaces and Cauchy-Schwarz inequality, see \cite{ETbook}.
\vskip 0.05in
\noindent
We now proceed with the proof of the proposition. 

	Let $f_j(\xi, \tau) = \lb \xi \rb^s \lb \tau - \xi^3 \rb^{b} \hat{u_j}(\xi,\tau)$. Then the desired estimate is  
	\begin{multline} \label{eq:nonlin2}
	\left\| \frac{ | \xi_0 | \lb \xi_0 \rb^{s + a}}{\lb \tau_0 - \xi_0^3 \rb^{b'}}   \underset{{\substack{\sum \tau_j = 0\\\sum \xi_j =0}}}{\iiint\!\!\iiint} \prod_{j=1}^4 \frac{f_j(\xi_j, \tau_j)}{\lb \xi_j \rb^{s} \lb \tau_j - \xi_j^3 \rb^{b} }
	\d \xi_1 \d \xi_2 \d \xi_3  \d \tau_1  \d \tau_2 \d \tau_3\right\|_{L^2_{\xi_0,\tau_0}} 
	\\ \lesssim \prod_{j=1}^4 \| f_j \|_{L^2_{\xi,\tau}}. 
	\end{multline}	
The notation above means that the inner integral is taken over the surfaces $\sum_{j=0}^4 \tau_j = 0$ and $\sum_{j=0}^4 \xi_j =0$. For fixed $(\xi_0,\tau_0)$, this is a six-dimensional surface in frequency space.
	
	We consider several cases. In the following, we write $|\xi_{min}| = \min_{j=1}^4 |\xi_j|$ and $|\xi_{max}|  = \max_{j=1}^4 |\xi_j|$. 
	\quad \\
        
	\noindent\textbf{\underline{Case 0: $|\xi_j| \lesssim 1$ for all $j\in\{0,1,2,3,4\}$.}}
	\, By a standard argument invoking the Cauchy--Schwartz inequality and Young's inequality, \eqref{eq:nonlin2} can be reduced to showing that 
	\[ \sup_{\xi_0, \tau_0} \frac{\lb \xi_0 \rb^{2s +2 + 2a}}{\lb \tau_0 - \xi_0^3 \rb^{2b'}} \iiint\limits_{\sum \tau_j = 0} \; \; \iiint\limits_{\sum \xi_j = 0}   \frac{\d \xi_1 \d \xi_2 \d \xi_3 \d \tau_1 \d \tau_2 \d \tau_3}{\prod_{j=1}^4\lb \xi_j \rb^{2s} \lb \tau_j - \xi_j^3 \rb^{2b} }
	< \infty.\]
	In the current case, the $\xi_j$ weights may be disregarded and we are left to bound
	\[ \sup_{|\xi_0| \lesssim 1, \; \tau_0}  \iiint\limits  \iiint\limits_{| \xi_j| \lesssim 1}   \frac{ \prod_{j=1}^3\lb \tau_j - \xi_j^3 \rb^{-2b}  \quad   \d \xi_1 \d \xi_2 \d \xi_3 \d \tau_1 \d \tau_2 \d \tau_3}{\lb \tau_0 - \xi_0^3 \rb^{ 2b'}  \lb (\tau_0 + \tau_1 + \tau_2 + \tau_3) - (\xi_0+\xi_1 + \xi_2 +\xi_3)^3 \rb^{2b} }.\]
	Integrating in $\tau_j$ repeatedly and then using the inequality $\lb a + b \rb \lesssim \lb a \rb \lb b \rb$, we arrive at 
	\[ \sup_{|\xi_0| \lesssim 1} \;  \iiint\limits_{| \xi_j| \lesssim 1}   \frac{  \d \xi_1 \d \xi_2 \d \xi_3 }{\lb \xi_0^3 + \xi_1^3 + \xi_2^3 + \xi_3^3 - (\xi_0+\xi_1 + \xi_2 +\xi_3)^3 \rb^{2b' } }, \]
	which is finite. 	
	\quad \\

	\noindent\textbf{\underline{Case 1: $ |\xi_{min}| \geq \frac{99}{100} |\xi_{max}|$ \& $|  \sum_{j=0}^4 \xi_1^3 | \approx |\xi_1|^3$.}}\,
	In this case, it will be helpful to define the maximum modulation 
	$M := \max\left\{ \max_{j=0}^4 \lb \tau_j - \xi_j^3\rb \right\}$. Note that $M \gtrsim \left|  \sum_{j=0}^4 \xi_j^3 \right|$. Then in this case we have 
	\[ | \xi_0| \lb \xi_0 \rb^{s +a} \prod_{j=1}^4 \lb \xi_j \rb^{-s} \lesssim M^{\frac13 -s + \frac{a}3}. \]
	Hence, writing \eqref{eq:nonlin2} in its dual form, we wish to establish that
	\begin{multline*} 
	\left|    \iiiint\limits_{\sum \tau_j = 0}  \iiiint\limits_{\sum \xi_j = 0} \frac{M^{\frac13 - s + \frac{a}3}}{\lb \tau_0 - \xi_0^3 \rb^{b'-b}}   \prod_{j=0}^4 \frac{f_j(\xi_j, \tau_j)}{ \lb \tau_j - \xi_j^3 \rb^{b} }
	\d \xi_1 \d \xi_2 \d \xi_3 \d \xi_4 \d \tau_1 \d \tau_2 \d \tau_3 \d \tau_4 \right| \\
	\lesssim  \prod_{j=0}^4 \| f_j \|_{L^2_{\xi,\tau}}. 
	\end{multline*}
	Suppose first that $M = \lb \tau_0 - \xi_0^3 \rb$. Then we can bound the left-hand side of the above quantity by
	\begin{align*} \label{eq:nonlin2}
	&\left|    \iiiint\limits_{\sum \tau_j = 0}  \iiiint\limits_{\sum \xi_j = 0} \frac{f_0(\xi_0, \tau_0)}{\lb \tau_0 - \xi_0^3 \rb^{b'-\frac13 + s - \frac{a}3}}   \prod_{j=1}^4 \frac{f_j(\xi_j, \tau_j)}{ \lb \tau_j - \xi_j^3 \rb^{b} }
	\d \xi_j \d \tau_j \right| \\
	\lesssim &\left|    \iiiint\limits_{\sum \tau_j = 0}  \iiiint\limits_{\sum \xi_j = 0} {f_0(\xi_0, \tau_0)}  \prod_{j=1}^4 \frac{f_j(\xi_j, \tau_j)}{ \lb \tau_j - \xi_j^3 \rb^{b + \frac14 b' - \frac1{12} + \frac{s}4 - \frac{a}{12}} }
	\d \xi_j \d \tau_j \right| \\
	\lesssim &\| f_0 \|_{L^2_{\xi,\tau}} \left\|  \;\; \iiint\limits_{\sum \tau_j = 0}  \;\; \iiint\limits_{\sum \xi_j = 0} \;    \frac{\prod_{j=1}^4f_j(\xi_j, \tau_j)\d \xi_1 \d \xi_2 \d \xi_3 \d \tau_1 \d \tau_2 \d \tau_3}{\prod_{j=1}^4 \lb \tau_j - \xi_j^3 \rb^{b + \frac14 b' - \frac1{12} + \frac{s}4 - \frac{a}{12}} }
	 \right\|_{L^2_{\xi_0,\tau_0}} \\
	\lesssim &\| f_0 \|_{L^2_{\xi,\tau}} \; \prod_{j=1}^4 \left\| \mathcal{F}^{-1} \left( \frac{f_j}{ \lb \tau - \xi^3 \rb^{b + \frac14 b' - \frac1{12} + \frac{s}4 - \frac{a}{12}} } \right)  \right\|_{L^{8}_{x,t}} \lesssim  \prod_{j=0}^4 \| f_j\|_{L^2_{\xi,\tau}} . 
	\end{align*}
	The last inequality uses the $X^{0,\frac12+} \hookrightarrow L^8_{x,t}$ embedding \eqref{eq:L8}, which is applicable as long as $a < 3s + 12b + 3b' - 7$. If the maximum modulation is $\lb \tau_j - \xi_j^3 \rb$ for some $j = 1,2,3,4$, the same argument applies. We distribute the excess power of $M$ to the other modulation multipliers. Each again ends up with exponent $b + \frac14 b' - \frac1{12} + \frac{s}4 - \frac{a}{12}$, and we argue as above.  
	\quad \\

	\noindent\textbf{\underline{Case 2: $ |\xi_{min}| \geq \frac{99}{100} |\xi_{max}|$ \& two $\xi_j$ are positive, or $|\xi_{min}| < \frac{99}{100} |\xi_{max}|$.}}\,  
	We may assume by relabeling that frequencies are numbered in descending order, i.e. 
	\[ | \xi_1| \geq |\xi_2| \geq |\xi_3| \geq |\xi_4|. \]

	\noindent\textbf{\underline{Case 2a: $|\xi_2| \approx |\xi_1|$.}}\,  
	In this case, $|\xi_0| \lesssim | \xi_1| \approx | \xi_2|$. We may thus write, for $s \leq 0$,
	\begin{equation} \label{eq:multBound}
	| \xi_0| \lb \xi_0 \rb^{s+a} \prod_{j=1}^4 \lb \xi_j \rb^{-s} \lesssim | \xi_{1} - \xi_{4}|^{\frac12 } | \xi_{1} + \xi_{4}|^{\frac12 } \lb \xi_{2} \rb^{\frac{-3s}2 + a } \lb \xi_{3} \rb^{\frac{ -3s}2},
	\end{equation}
or equivalently
	\begin{equation*}
	| \xi_0| \lb \xi_0 \rb^{s+a} \lesssim | \xi_{1} - \xi_{4}|^{\frac12 } | \xi_{1} + \xi_{4}|^{\frac12 } \lb \xi_1 \rb^s \lb \xi_4 \rb^s \lb \xi_{2} \rb^{-\frac{s}2 + a } \lb \xi_{3} \rb^{-\frac{ s}2},
	\end{equation*}	
	
	Using the notation defined in \eqref{eq:operatornames}, it suffices to bound
	\begin{align*}
	& \left\| \left( \I^{\frac12} \I_-^{\frac12} (D^s u_1 D^s u_4 ) \right)  \left( D^{a- \frac{s}{2}} u_2 \right)  \left( D^{-\frac{s}{2} } u_3 \right)   \right\|_{X^{0,-b'}} \\
	\lesssim & 
	\left\| \left( \I^{\frac12 } \I_-^{\frac12 } (D^s u_1 D^s u_4 ) \right)  \left( D^{a-\frac{s}{2} } u_2 \right)  \left( D^{-\frac{s}{2} } u_3 \right)   \right\|_{L_t^{\frac2{1+2b'}+}L^2_x} \\
	\leq &
	\left\|  \I^{\frac12} \I_-^{\frac12} (D^s u_1 D^s u_4 ) \right\|_{L^2_{x,t}}  \left\| D^{a-\frac{s}{2} } u_2 \right\|_{L^{\frac2{b'}+}_tL^\infty_x}   \left\| D^{-\frac{s}{2}} u_3   \right\|_{L^{\frac{2}{b'}+}_tL^\infty_x}.
	\end{align*}

	Using \cite[Corollary 1]{G}, we may bound the $L^2_{x,t}$ norm in the previous line by $\| u_1 \|_{X^{s,b}} \| u_4 \|_{X^{s,b}}$. 
	It remains to show that 
	\[  \left\| D^{a-\frac{s}{2}} u   \right\|_{L^{\frac{2}{b'}+}_tL^\infty_x} \lesssim \| u \|_{X^{s,b}}, \!\quad \text{or equivalently} \!\quad  \left\| D^{a-\frac{3s}{2} } u   \right\|_{L^{\frac{2}{b'}+}_tL^\infty_x} \lesssim \| u \|_{X^{0,b}}.\] 	
	We have by Sobolev embedding \eqref{eq:Sobolev} 
	\begin{align*} \label{eq:sob}
	\left\| D^{a-\frac{3s}{2} } u \right\|_{L^{\frac{2}{b'}+}_tL^\infty_x} \lesssim
	\left\| D^{a-\frac{3s}{2} + \frac1q + } u \right\|_{L^{\frac{2}{b'}+}_tL^q_x}
	\end{align*}
	for any $q \in [2,\infty]$. The $X^{0, \frac12+} \hookrightarrow L^{r}_t H^{\frac1{r}, \frac{2r}{r-4}}_x$ estimate \eqref{eq:G}, which holds for $r \geq 4$, gives the desired bound as long as $a < \frac12(3s + 3b' - 1)$. 

If $s >0$, replace \eqref{eq:multBound} by 
\begin{equation*} 
	| \xi_0| \lb \xi_0 \rb^{s+a} \prod_{j=1}^4 \lb \xi_j \rb^{-s} \lesssim | \xi_{1} - \xi_{4}|^{\frac12 } | \xi_{1} + \xi_{4}|^{\frac12 } \lb \xi_{2} \rb^{a }.
	\end{equation*}
The argument then carries through as above as long as $a < \frac12(3b' - 1)$.

	\quad \\

	\noindent\textbf{\underline{Case 2b: $ |\xi_{2}| \ll |\xi_{1}|$.}}\, That is, $|\xi_0| \approx |\xi_1| \gg |\xi_2|, |\xi_3|, |\xi_4|$.  In this case we have for $s \leq 0$ the inequality
	\begin{equation} \label{eq:multBound2}
	| \xi_0| \lb \xi_0 \rb^{s+a} \prod_{j=1}^4 \lb \xi_j \rb^{-s} \lesssim | \xi_{4} - \xi_{1}|^{\frac12 } | \xi_{1} + \xi_{4}|^{\frac12 } \lb \xi_{0} \rb^{\frac{-3s}2 + a } \lb \xi_{2} \rb^{\frac{ -3s}2}.
	\end{equation}
	Using the dual formulation with the bound \eqref{eq:multBound2}, it would suffice to show that 
	\[ \left\| \left( \I^{\frac12} \I_-^{\frac12} (D^s u_1 D^s u_4 ) \right)  \left( D^{a- \frac{s}{2}} u_0 \right)  \left( D^{-\frac{s}{2} } u_2 \right)   \right\|_{X^{0,-b}}  \!\!\lesssim \| u_0 \|_{X^{s,b'}} \prod_{j=1,2,4} \| u_j\|_{X^{s,b}}.\] 
	We have 
	\begin{align*}
	& \left\| \left( \I^{\frac12} \I_-^{\frac12} (D^s u_1 D^s u_4 ) \right)  \left( D^{a- \frac{s}{2}} u_0 \right)  \left( D^{-\frac{s}{2} } u_2 \right)   \right\|_{X^{0,-b}} \\
	\lesssim & 
	\left\| \left( \I^{\frac12 } \I_-^{\frac12 } (D^s u_1 D^s u_4 ) \right)  \left( D^{a-\frac{s}{2} } u_0 \right)  \left( D^{-\frac{s}{2} } u_2 \right)   \right\|_{L_t^1 L^2_x} \\
	\leq &
	\left\|  \I^{\frac12} \I_-^{\frac12} (D^s u_1 D^s u_4 ) \right\|_{L^2_{x,t}}  \left\| D^{a-\frac{s}{2} } u_0 \right\|_{L^4_tL^\infty_x}   \left\| D^{-\frac{s}{2}} u_2   \right\|_{L^4_tL^\infty_x}.
	\end{align*}
	So it remains to show that 
	\[ \left\| D^{a-\frac{3s}{2} } u   \right\|_{L^4_tL^\infty_x} \lesssim \| u \|_{X^{0,b'}}.\] 
	Again we have by Sobolev embedding \eqref{eq:Sobolev} 
	\begin{align*} 
	\left\| D^{a-\frac{3s}{2} } u \right\|_{L^4_tL^\infty_x} \lesssim
	\left\| D^{a-\frac{3s}{2} + \frac1q + } u \right\|_{L^4_tL^q_x}
	\end{align*}
	for any $q \in [2,\infty]$.
	Interpolation between the $L^{r}_t H^{\frac1{r}, \frac{2r}{r-4}}_x$ estimate \eqref{eq:G} and the trivial $L^2_{x,t}$ estimate \eqref{eq:trivial} gives, for $0 \leq \theta \leq 1$ and $r \geq 4$, 
	\[ \| D^{\frac\theta{r}} u \|_{L^\frac{2r}{2\theta + r(1-\theta)}_t L^\frac{2r}{r - 4\theta}_x} \lesssim \| u \|_{X^{0,\frac\theta2+}} .\]
	For any $0 < \epsilon \ll 1$, if we take 
	\[ \theta = 2(b' - \epsilon), \quad r = 2 + \frac{2}{4(b' - \epsilon) - 1} > 4, \quad \text{and} \quad q = \frac1{1-2(b'-\epsilon)},\]
	then the interpolation estimate gives 
	\[ \left\| D^{a-\frac{3s}{2} + \frac1q + } u \right\|_{L^4_tL^q_x} \lesssim \| u \|_{X^{0,b'}} \]
	as long as 
	\[ a - \frac{3s}2 + \frac1q + \leq \frac{\theta}{r}, \quad \text{ i.e. } \quad a < \frac12\left(3s + 6b' - \frac52\right).\] 
	This is the worst case.

If $s >0$, we may replace \eqref{eq:multBound2} by 
\begin{equation*} 
	| \xi_0| \lb \xi_0 \rb^{s+a} \prod_{j=1}^4 \lb \xi_j \rb^{-s} \lesssim | \xi_{4} - \xi_{1}|^{\frac12 } | \xi_{1} + \xi_{4}|^{\frac12 } \lb \xi_{0} \rb^{ a }.
\end{equation*}
The argument closes as above as long as $a < \frac12(6b' - \frac52)$. 

This completes the proof.

\subsection{Proof of Proposition \ref{duhamelEst}} \label{duhamelEstProof}

 Since $|\hat{N}|$ is unaffected by spatial translations, it suffices to consider the $H^\frac{s+1}3_t$ norm at $x=0$. Suppose first that $s \leq 2-3b'$.  We have 
 \begin{align*}
\int_0^t W^{t-t'}_\R[N(x,t')] \d t' \Big|_{x=0} 
&= 
\int_{-\infty}^\infty \int_0^t e^{i(t-t')\xi^3}N(\hat{\xi},t') \d t' \d \xi \\
&= 
\int_{-\infty}^\infty \int_{-\infty}^\infty \int_0^t e^{i\tau t' + i(t-t')\xi^3}\hat{N}(\xi,\tau) \d t' \d \tau \d \xi \\
&=
- i \int_{-\infty}^\infty \int_{-\infty}^\infty \frac{e^{i\tau t} - e^{it \xi^3}}{\tau - \xi^3} \hat{N}(\xi,\tau)  \d \tau \d \xi.
 \end{align*}
We consider the regions where $|\tau - \xi^3| \lesssim 1$ and where $|\tau - \xi^3| \gtrsim 1$ separately.  When $|\tau - \xi^3| \lesssim 1$, we Taylor expand $e^{i\tau t} - e^{it \xi^3}$ and argue just as in the proof of \cite[Prop. 3.4]{ET} to obtain the desired $\| N\|_{X^{s,-b'}}$ bound. It remains to bound
\[ \left\| \iint_{|\tau-\xi^3| >1} \frac{e^{i\tau t} - e^{it \xi^3}}{\tau - \xi^3} \hat{N}(\xi,\tau)  \d \tau \d \xi\right\|_{H^\frac{s+1}3_t}. \]
We estimate the integral involving $e^{it\tau}$ and that involving $e^{it \xi^3}$ separately. First, we see 
\begin{align*}
\Big\| \iint_{|\tau-\xi^3| >1} \frac{e^{i\tau t}}{\tau - \xi^3} \hat{N}(\xi,\tau)  \d \tau &\d \xi\Big\|_{H^\frac{s+1}3_t}
 \lesssim
 \left\| \lb \tau \rb^{\frac{s+1}{3}} \int \frac{1}{\lb \tau - \xi^3 \rb} \hat{N}(\xi,\tau)  \d \xi\right\|_{L^2_\tau} \\
 &\lesssim 
 \| N \|_{X^{s,-b'}} \; \sup_\tau \lb \tau \rb^{\frac{s+1}3} \| \lb \tau - \xi^3 \rb^{b'-1} \lb \xi \rb^{-s} \|_{L^2_\xi} \\
 &\lesssim
 \| N \|_{X^{s,-b'}}. 
\end{align*}
To obtain the final inequality, we considered $|\xi| \leq 1$ and $|\xi| \geq 1$ separately. When $|\xi | \leq 1$, we see that $\sup_\tau \lb \tau \rb^{\frac{s+1}3} \| \lb \tau - \xi^3 \rb^{2b'-2} \lb \xi \rb^{-2s} \|_{L^2_\xi} \lesssim \sup_\tau \lb \tau \rb^{\frac{s+1}3 + b' -1}$, which is finite if $s \leq 2 - 3b'$. When $|\xi| \geq1$, we note that 
\begin{align*}
\sup_\tau \; \lb \tau \rb^{\frac{s+1}3} \|& \lb \tau - \xi^3 \rb^{b'-1} \lb \xi \rb^{-s} \|_{L^2_{|\xi| \geq 1}}\\
&\lesssim
\sup_\tau \; \lb \tau \rb^{\frac{s+1}3} \left( \int_{|\xi| \geq 1} \lb \tau - \xi^3 \rb^{2b'-2} \lb \xi \rb^{-2s} \d \xi \right)^\frac12 \\
&\lesssim 
\sup_\tau \; \lb \tau \rb^{\frac{s+1}3} \left( \int_{|w| \geq 1} \lb \tau - w \rb^{2b'-2} \lb w \rb^{-\frac{2(s+1)}3} \d w \right)^\frac12 
\; \lesssim \; 1.
\end{align*}
The last inequality follows from Lemma \ref{calcEst} as long as $s \geq -1$. 

We now look at 
\[ \left\| \eta(t) \iint_{|\tau-\xi^3| >1} \frac{ e^{it \xi^3}}{\tau - \xi^3} \hat{N}(\xi,\tau)  \d \tau \d \xi\right\|_{H^\frac{s+1}3_t}. \]
On the region where $|\xi|\leq1$, we have 
\begin{align*}
  \Big\| \eta(t)& \iint_{|\xi| \leq 1} \frac{ e^{it \xi^3}}{\lb \tau - \xi^3 \rb } \hat{N}(\xi,\tau)  \d \tau \d \xi\Big\|_{H^\frac{s+1}3_t}\\
  &\lesssim
   \iint_{|\xi| \leq 1 } \frac{ \| \eta(t) e^{it \xi^3}\|_{H^\frac{s+1}3_t}}{\lb \tau - \xi^3 \rb } \left| \hat{N}(\xi,\tau)   \right| \d \tau \d \xi \\
   &\lesssim
   \| N \|_{X^{s,-b'}} \; \| \chi_{[-1,1]}(\xi) \lb \tau - \xi^3 \rb^{b'-1} \|_{L^2_{\xi,\tau}} 
   \lesssim  
   \| N \|_{X^{s,-b'}}.
\end{align*}
This estimate holds for any $b' < \frac12$. 
It remains to bound 
\begin{align*}
 \left\|  \iint_{|\xi|>1} \frac{ e^{it \xi^3}\hat{N}(\xi,\tau)}{\lb \tau - \xi^3 \rb}   \d \tau \d \xi\right\|_{H^\frac{s+1}3_t}
 &\lesssim
 \left\|  \iint \frac{ e^{it w}}{ \lb \tau - w \rb} \hat{N}(w^{1/3},\tau)  \frac{\d \tau \d w}{\lb w \rb^{2/3}} \right\|_{H^\frac{s+1}3_t} \\
 &\approx 
 \left\| \lb w \rb^{\frac{s-1}3}  \int \frac{\hat{N}(w^{1/3},\tau)}{ \lb \tau - w \rb}   \d \tau  \right\|_{L^2_w} \\
 &\lesssim
 \left\| \lb w \rb^{\frac{s-1}3}  \frac{\hat{N}(w^{1/3},\tau)}{ \lb \tau - w \rb^{b'}}    \right\|_{L^2_{w, \tau}} 
 \; \lesssim \; 
 \| N \|_{X^{s,-b'}}. 
\end{align*}
This completes the proof for $-1 \leq s \leq 2 - 3b'$. 

Reviewing the argument above, we see that it holds for large $s$ with the exception of the estimate on the term 
\begin{align} \label{eq:term1}
 \left\| \lb \tau \rb^{\frac{s+1}{3}} \int \frac{1}{\lb \tau - \xi^3 \rb} \hat{N}(\xi,\tau)  \d \xi\right\|_{L^2_\tau}. 
\end{align}
Let $R = \{ (\xi, \tau) \; : \; |\tau | \geq 1 \text { and } |\tau| \gg |\xi|^3 \}$. Then note that 
\[ \lb \tau \rb \lesssim \chi_R(\xi,\tau) \lb \tau - \xi^3 \rb + \lb \xi \rb^3. \]
Hence 
\begin{align*}
 \eqref{eq:term1} \lesssim \left\| \int \frac{\chi_R(\xi,\tau)}{\lb \tau - \xi^3 \rb^{\frac{2-s}{3}}} \hat{N}(\xi,\tau)  \d \xi\right\|_{L^2_\tau} + \left\| \int \frac{\lb \xi \rb^{s+1}}{\lb \tau - \xi^3 \rb} \hat{N}(\xi,\tau)  \d \xi\right\|_{L^2_\tau} . 
\end{align*}
The last term can be bounded by 
\[ \sup_\tau \left\| \frac{\lb \xi \rb}{\lb \tau - \xi^3 \rb^{1-b'}} \right\|_{L^2_\xi} \| N \|_{X^{s,-b'}} \lesssim  \| N \|_{X^{s,-b'}}, \]
where the control on the supremum of the $L^2_\xi$ norm comes from a calculus calculation.

\subsection{Proof of Proposition \ref{correctionEst}}\label{correctionEstProof}

For the correction term, we wish to obtain, for $s>0$, an estimate of the form 
\[ \left\| \int \chi_R(\xi_0, \tau_0) \lb \tau_0 - \xi_0^3 \rb^{\frac{s+a-2}{3}} | \hat{N}(\xi_0,\tau_0)| \d \xi_0 \right\|_{L^2_{\tau_0}} \lesssim \prod_{j=1}^4 \| u_j\|_{X^{s,b}}, \]
where $N(x,t) = \partial_x \left( u_1 u_2 u_3 u_4 \right)$. 
Writing the estimate in its dual form and introducing functions $f_j = \lb \xi \rb^s \lb \tau - \xi^3 \rb^{b} \hat{u_j}$, it amounts to showing
\begin{multline*} 
\left|    \iiiint\limits_{\sum \tau_j = 0}  \iiiint\limits_{\sum \xi_j = 0} \chi_R(\xi_0,\tau_0) h(\tau_0)\frac{|\xi_0| \prod_{j=1}^4 \lb \xi_j \rb^{-s}}{\lb \tau_0 - \xi_0^3 \rb^{\frac{2-(s+a)}3}}   \prod_{j=1}^4 \frac{f_j(\xi_j, \tau_j)}{ \lb \tau_j - \xi_j^3 \rb^{b} }
\d \xi_j \d \tau_j \right| \\
\lesssim  \| h \|_{L^2_\tau} \prod_{j=1}^4 \| f_j \|_{L^2_{\xi,\tau}}. 
\end{multline*}
Using the fact that we are constrained to the set $R$ and then the Cauchy-Schwartz inequality, the LHS of the above quantity is bounded by 
\begin{align*} 
\Bigg|  &  \iiiint\limits_{\sum \tau_j = 0}  \iiiint\limits_{\sum \xi_j = 0} \chi_R(\xi_0,\tau_0) h(\tau_0) \; \lb \tau_0  \rb^{\frac{s+a - 2}3} |\xi_0| \lb \xi_0 \rb^{-s} \prod_{j=1}^4 \frac{f_j(\xi_j, \tau_j)}{ \lb \tau_j - \xi_j^3 \rb^{b} }
\d \xi_j \d \tau_j \Bigg| \\
&\lesssim 
\| h\|_{L^2_\tau} \Biggl[ \sup_\tau \; \lb \tau \rb^{\frac{s+a -2}{3}} \left\| \chi_R(\xi,\tau) |\xi| \lb \xi \rb^{-s} \right\|_{L^2_\xi} \Biggr]\; \prod_{j=1}^4 \left\| \mathcal{F}^{-1} \left( \frac{f_j}{ \lb \tau - \xi^3 \rb^{b} } \right)  \right\|_{L^{8}_{x,t}} 
\end{align*}
By \eqref{eq:L8}, this is bounded by 
\[ \|h \|_{L^2_\tau}\prod_{j=1}^4 \| f_j\|_{L^2_{\xi,\tau}}\]
as long as 
\[ \sup_\tau \; \lb \tau \rb^{\frac{s+a -2}{3}} \left\| \chi_R(\xi,\tau) |\xi| \lb \xi \rb^{-s} \right\|_{L^2_\xi}  < \infty.\]
A calculation gives
\begin{align*}
\sup_\tau \; \lb \tau \rb^{\frac{s+a -2}{3}} \left\| \chi_R(\xi,\tau) |\xi| \lb \xi \rb^{-s} \right\|_{L^2_\xi} \lesssim
\sup_\tau \; \lb \tau \rb^{\frac{s+a -2}{3}} \lb \tau \rb^{\frac{\max\{3/2-s,0\}}3+},
\end{align*}
which is finite as long as $a < \frac12$ and $s+a < 2$.

\section*{Acknowledgements}

We thank Prof. Bingyu Zhang for useful discussions regarding this problem. {The first author was supported by NSF MSPRF \#1704865. The second author's work was supported by a grant from the Simons Foundation (\#355523 Nikolaos Tzirakis) and the Illinois Research Board RB18051.}

\end{document}